\newtheorem{theorem}{Theorem}[section]
\theoremstyle{plain}
\newtheorem{claim}{Claim}
\newtheorem{lemma}{Lemma}[section]
\numberwithin{equation}{section}
\theoremstyle{definition}
\theoremstyle{remark}
\newtheorem{remark}{Remark}[section]
\begin{document}
\title[The Dirichlet problem]{
The Dirichlet problem for Lagrangian mean curvature equation}

\author{Arunima Bhattacharya}
\address{Department of Mathematics\\
University of Washington, Seattle, WA 98195, U.S.A.}
\email{arunimab@uw.edu}

\begin{abstract}
In this paper, we solve the Dirichlet problem with continuous boundary data for the Lagrangian mean curvature equation on a uniformly convex, bounded domain in $\mathbb{R}^n$.
\end{abstract}
\maketitle
\section{ Introduction}
	In this paper, we consider the Dirichlet problem for the Lagrangian mean curvature equation on a uniformly convex, bounded domain $\Omega\subset\mathbb{R}^n$, given by
	\begin{align}
		\begin{cases}
		F(D^{2}u)=\sum _{i=1}^{n}\arctan \lambda_{i}=\psi(x) \text{ in }  \Omega\\
		u=\phi \text{ on }
		\partial \Omega
		\end{cases}\label{lab}
		\end{align}
	where $\lambda_i$'s are the eigenvalues of the Hessian matrix $D^2u$, $\psi$ is the potential for the mean curvature of the Lagrangian submanifold $\{(x,Du(x))|x\in
\Omega\}\subseteq\mathbb {R}^n \times \mathbb {R}^n$, and $\phi$ is a given continuous function on $\partial \Omega$.

Our main results in this paper are the following:
	\begin{theorem}\label{main}
		\label{1}
		Suppose that $\phi\in C^{0}(\partial \Omega)$ and $\psi: \overline \Omega\rightarrow [(n-2)\frac{\pi}{2}+\delta, n\frac{\pi}{2})$ is in $C^{1,1}(\overline \Omega)$, where $\Omega$ is a uniformly convex, bounded domain in $\mathbb{R}^{n}$ and $\delta>0$. Then there exists a unique solution $u\in C^{2,\alpha}(\Omega)\cap  C^{0}(\partial \Omega)$ to the Dirichlet  problem (\ref{lab}).
	\end{theorem}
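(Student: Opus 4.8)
\textit{Strategy and set-up.} The plan is to construct the solution by Perron's method, the two substantive ingredients being the interior a priori estimates available for solutions with supercritical phase and a family of barriers built from the uniform convexity of $\Omega$; alternatively one may approximate $\phi$ by smooth data and pass to the limit using the same two ingredients. Since $\arctan$ is increasing, $F(D^2u)=\sum_i\arctan\lambda_i(D^2u)$ is degenerate elliptic with $\partial F/\partial(D^2u)=(I+(D^2u)^2)^{-1}>0$. I take for granted: (i) for a viscosity solution of $F(D^2u)=\psi$ with $\psi\in C^{1,1}(\overline\Omega)$ valued in $[(n-2)\tfrac\pi2+\delta,n\tfrac\pi2)$ — where the super-level sets of $F$ are convex — there are interior a priori estimates, first $C^{1,1}$ and then $C^{2,\alpha}$, depending only on $\|u\|_{L^\infty}$, $\|\psi\|_{C^{1,1}(\overline\Omega)}$, $\delta$ and the distance to $\partial\Omega$; and (ii) the comparison principle for viscosity sub/supersolutions of (\ref{lab}). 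Let $\mathcal S$ be the set of $v\in C^0(\overline\Omega)$ that are viscosity subsolutions of $F(D^2v)=\psi$ in $\Omega$ with $v\le\phi$ on $\partial\Omega$, and put $u:=\sup_{v\in\mathcal S}v$.

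\textit{Barriers.} Uniform convexity and boundedness of $\Omega$ furnish a radius $\rho>0$ such that for every $\xi\in\partial\Omega$, with $z_\xi:=\xi-\rho\nu(\xi)$ ($\nu$ the outer unit normal), one has $\overline\Omega\setminus\{\xi\}\subset B_\rho(z_\xi)$; thus $g_\xi(x):=\rho^2-|x-z_\xi|^2$ is positive on $\overline\Omega\setminus\{\xi\}$ and vanishes at $\xi$. Fix $\e>0$ and let $\omega$ be a modulus of continuity for $\phi$. The quadratic
\[
\underline w_{\xi,\e}(x):=\phi(\xi)-\e-C_\e\,g_\xi(x)
\]
has $F(D^2\underline w_{\xi,\e})\equiv n\arctan(2C_\e)$, which exceeds $\sup_{\overline\Omega}\psi$ once $C_\e\ge\tfrac12\tan\!\big(\tfrac1n\sup_{\overline\Omega}\psi\big)$ — here the bound $\psi<n\tfrac\pi2$ is used — so $\underline w_{\xi,\e}$ is a strict classical subsolution; enlarging $C_\e$ (using $g_\xi\ge\gamma>0$ away from $\xi$ on $\partial\Omega$, while $\phi(\xi)-\e-\phi(x)<0$ near $\xi$ by the choice of $\e$ against $\omega$) makes $\underline w_{\xi,\e}\le\phi$ on $\partial\Omega$, so $\underline w_{\xi,\e}\in\mathcal S$ and $\underline w_{\xi,\e}(\xi)=\phi(\xi)-\e$. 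Symmetrically $\overline w_{\xi,\e}(x):=\phi(\xi)+\e+C_\e'\,g_\xi(x)$ has $F(D^2\overline w_{\xi,\e})\equiv-n\arctan(2C_\e')<0<\delta\le\inf_{\overline\Omega}\psi$, hence is a strict classical supersolution, and for $C_\e'$ large $\overline w_{\xi,\e}\ge\phi$ on $\partial\Omega$ with $\overline w_{\xi,\e}(\xi)=\phi(\xi)+\e$. In particular $\mathcal S\ne\emptyset$ and, comparing any $v\in\mathcal S$ with a global supersolution of this type, $u$ is finite.

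\textit{Conclusion of the Perron argument.} Stability of viscosity subsolutions under suprema gives that the upper semicontinuous envelope of $u$ is a subsolution; the usual perturbation argument gives that the lower semicontinuous envelope is a supersolution; the comparison principle (ii) then forces the two envelopes to coincide, so $u\in C^0(\Omega)$ is a viscosity solution of $F(D^2u)=\psi$ in $\Omega$. For the boundary values, $\underline w_{\xi,\e}\in\mathcal S$ yields $u\ge\underline w_{\xi,\e}$, while testing any $v\in\mathcal S$ against the smooth \emph{strict} supersolution $\overline w_{\xi,\e}$ at an interior maximum of $v-\overline w_{\xi,\e}$ yields $v\le\overline w_{\xi,\e}$, hence $u\le\overline w_{\xi,\e}$; letting $x\to\xi$ and then $\e\to0$ gives $\lim_{x\to\xi}u(x)=\phi(\xi)$, so $u\in C^0(\overline\Omega)$ with $u|_{\partial\Omega}=\phi$. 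The interior estimates (i) upgrade $u$ to $C^{2,\alpha}(\Omega)$. Uniqueness: if $u,\tilde u$ are two such solutions then $w:=u-\tilde u$ satisfies $a^{ij}(x)D_{ij}w=0$ with $a^{ij}(x)=\int_0^1\big(I+(tD^2u+(1-t)D^2\tilde u)^2\big)^{-1}_{ij}\,dt$ positive definite and locally uniformly elliptic (the Hessians being locally bounded by (i)), no zeroth-order term, and $w=0$ on $\partial\Omega$; the strong maximum principle forces $w\equiv0$.

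\textit{Main difficulty.} The depth of the argument sits in the interior a priori theory of step (i) — the Hessian estimate exploiting convexity of the super-level sets of $F$ in the supercritical range, and the bootstrap to $C^{2,\alpha}$ — which is exactly what regularizes the Perron solution (and I would expect this, rather than the soft Perron machinery, to be where the $C^{1,1}$-hypothesis on $\psi$ and the $\delta$ are genuinely spent). Within the Dirichlet-problem argument itself the delicate point is the barrier construction: uniform convexity of $\Omega$ is what supplies the enclosing tangent sphere, and it is the two-sided phase bound $(n-2)\tfrac\pi2+\delta\le\psi<n\tfrac\pi2$ that guarantees the existence of strict sub- and supersolutions of the shape $\phi(\xi)\mp\e\mp C\,g_\xi$, which is precisely what lets merely continuous boundary data be attained.
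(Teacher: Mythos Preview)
Your Perron-method argument is essentially correct and constitutes a genuinely different route from the paper's. The paper does \emph{not} use Perron's method for Theorem~\ref{main}; instead it (a) establishes full boundary $C^{2,\alpha}$ estimates for solutions with $C^4$ boundary data (this is the lengthy Section~3, and in particular the double-normal estimate is the delicate step), (b) runs the method of continuity to produce classical solutions with $C^4$ boundary data, and then (c) approximates $\phi\in C^0$ by $\phi_k\in C^4$, applies the interior Hessian estimate of \cite{AB} to the smooth $u_k$, and passes to the limit using the classical maximum principle for the linearized equation on $u_k-u_j$. Your approach bypasses (a) and (b) entirely: uniform convexity supplies the tangent-sphere barriers, Perron yields a viscosity solution directly, and the interior estimate upgrades regularity.

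What each approach buys: yours is shorter, but it leans on two black boxes that the paper deliberately avoids. First, the viscosity comparison principle~(ii) for \emph{variable} $\psi$ is not the general soft fact you present it as; the paper's own Remark after Theorem~\ref{2.20} emphasizes that comparison for strictly (but not uniformly) elliptic equations with variable right-hand side is delicate, and is only known here because the supercritical phase permits a concave extension of $F$ (as in \cite{CW,CPX}) or the Harvey--Lawson tameness of \cite{hl2}. You should cite one of these rather than treat it as routine. Second, your step~(i) applies the interior Hessian estimate to a bare viscosity solution, whereas \cite{AB} is stated for smooth solutions; the paper sidesteps this by applying the estimate only to the classical approximants $u_k$. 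You can close this gap by approximation and uniqueness, but it deserves a sentence. By contrast, the paper's approach is more self-contained --- it never needs a viscosity comparison principle for variable phase, only the classical maximum principle for smooth solutions --- at the cost of the substantial boundary-estimate machinery.
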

	\begin{theorem}\label{2.20}
		\label{1}
		Suppose that $\phi\in C^{0}(\partial \Omega)$ and $\psi:\overline \Omega \rightarrow (-n\frac{\pi}{2}, n\frac{\pi}{2})$ is a constant, where $\Omega$ is a uniformly convex, bounded domain in $\mathbb{R}^{n}$. Then there exists a unique solution $u\in C^{0}(\overline{\Omega})$ to the Dirichlet  problem (\ref{lab}).
	\end{theorem}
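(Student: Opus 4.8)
The plan is to solve (\ref{lab}) in the viscosity sense by Perron's method, first for boundary data in $C^{1,1}(\partial\Omega)$ and then for general continuous data by uniform approximation. A softer argument than the one behind Theorem~\ref{main} is unavoidable here: a constant $\psi$ may lie in the subcritical range $|\psi|<(n-2)\frac\pi2$, where solutions of the special Lagrangian equation need not be $C^2$, so classical solvability and interior estimates are out of reach and only $u\in C^0(\overline\Omega)$ can be claimed. Two reductions streamline matters. Since $v\mapsto-v$ takes a solution of $F(D^2v)=\psi$ with data $\phi$ to one of $F(D^2v)=-\psi$ with data $-\phi$, we may assume $\psi\in[0,n\frac\pi2)$. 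And for each $a\in\mathbb R$ the quadratic $\frac a2|x|^2$ solves $F(D^2v)=n\arctan a$ exactly, which explains why the admissible range for $\psi$ is all of $(-n\frac\pi2,n\frac\pi2)$ and supplies explicit sub/supersolutions realizing every value of the operator.

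Perron's method needs two ingredients for the equation $F(D^2\,\cdot\,)=\psi$. First, a \emph{comparison principle}: if $w$ is a viscosity subsolution, $v$ a viscosity supersolution, and $w\le v$ on $\partial\Omega$, then $w\le v$ in $\Omega$. Because the operator has no zeroth-order term one first passes to the strict subsolution $w_\e=w+\e(|x|^2-C_\Omega)$; since $F$ is strictly increasing on symmetric matrices ($F(X+2\e I)>F(X)$), $w_\e$ satisfies $F\ge\psi$ strictly while $w_\e<v$ on $\partial\Omega$, and applying the Jensen--Ishii lemma at a hypothetical interior positive maximum of $w_\e-v$ produces $X\le Y$ with $\psi<F(X)\le F(Y)\le\psi$, a contradiction; letting $\e\to0$ yields the comparison principle, hence also uniqueness. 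Second, \emph{barriers}, where the uniform convexity of $\Omega$ enters decisively. Given $x_0\in\partial\Omega$, choose a ball $B_R(y_0)\supseteq\Omega$ with $\overline{B_R(y_0)}\cap\partial\Omega=\{x_0\}$; then $d(x):=R-|x-y_0|$ is smooth on $\overline\Omega$, positive off $x_0$, concave with Hessian eigenvalues $0$ and $-|x-y_0|^{-1}$ (multiplicity $n-1$), and $d(x)\ge c_0|x-x_0|^2$ on $\partial\Omega$. For $\phi\in C^{1,1}$ take smooth increasing concave $\mu,\nu$ with $\mu(0)=\nu(0)=0$ and put
\begin{gather*}
\overline u_{x_0}=\phi(x_0)+D\phi(x_0)\cdot(x-x_0)-\tfrac b2|x-x_0|^2+\mu(d(x)),\\
\underline u_{x_0}=\phi(x_0)+D\phi(x_0)\cdot(x-x_0)+\tfrac{b'}2|x-x_0|^2-\nu(d(x)).
\end{gather*}
Since $D^2\mu(d)\le 0$ and $D^2(-\nu(d))\ge 0$, Weyl's inequality gives $F(D^2\overline u_{x_0})\le n\arctan(-b)$ and $F(D^2\underline u_{x_0})\ge n\arctan(b')$ throughout $\Omega$, so with $b,b'\ge0$ chosen so that $n\arctan(-b)\le\psi\le n\arctan(b')$ these are a global super- and a global subsolution. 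Using $d\ge c_0|x-x_0|^2$ on $\partial\Omega$, the chord bound $\mu(t)\ge\frac{\mu(T)}{T}t$ on $[0,T]$ with $T=\max_{\overline\Omega}d$, and the $C^{1,1}$ Taylor estimate $|\phi(x)-\phi(x_0)-D\phi(x_0)\cdot(x-x_0)|\le C_\phi|x-x_0|^2$, one picks $\mu(T),\nu(T)$ large enough that $\overline u_{x_0}\ge\phi\ge\underline u_{x_0}$ on $\partial\Omega$, with equality at $x_0$.

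With these, Perron's method — the class of subsolutions lying below $\phi$ on $\partial\Omega$ is nonempty by the $\underline u_{x_0}$'s and bounded above by the $\overline u_{x_0}$'s — produces a viscosity solution $u$ of $F(D^2u)=\psi$ in $\Omega$ with $\underline u_{x_0}\le u\le\overline u_{x_0}$ for every $x_0$; letting $x\to x_0$ shows $u\in C^0(\overline\Omega)$ and $u=\phi$ on $\partial\Omega$, which settles the theorem for $\phi\in C^{1,1}(\partial\Omega)$. For general $\phi\in C^0(\partial\Omega)$, take $\phi_k\in C^{1,1}(\partial\Omega)$ with $\phi_k\to\phi$ uniformly and let $u_k$ be the corresponding solutions; since a constant added to a solution is still a solution, the comparison principle gives $\|u_j-u_k\|_{L^\infty(\overline\Omega)}\le\|\phi_j-\phi_k\|_{L^\infty(\partial\Omega)}$, so $(u_k)$ is Cauchy in $C(\overline\Omega)$, and its uniform limit $u$ lies in $C^0(\overline\Omega)$, equals $\phi$ on $\partial\Omega$, and solves $F(D^2u)=\psi$ in $\Omega$ by stability of viscosity solutions under uniform convergence; uniqueness is again the comparison principle. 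I expect the main obstacle to be the comparison principle — the operator is a priori neither convex/concave nor uniformly elliptic and has no zeroth-order term, so it must be pushed through the strict-perturbation trick and the Jensen--Ishii lemma rather than a maximum principle for a linearized equation — together with confirming that uniform convexity of $\Omega$ really does furnish barriers over the whole constant range, in particular a sufficiently concave supersolution barrier when $\psi$ is near $0$.
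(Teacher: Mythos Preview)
Your proposal is correct and follows the same overall architecture as the paper's proof: Perron's method, a comparison principle for the strictly elliptic equation $F(D^2u)=\psi$ with constant right-hand side, explicit barriers at each boundary point exploiting uniform convexity, and passage from smooth to continuous boundary data by uniform approximation together with the comparison principle. The differences lie only in how two of the ingredients are executed.

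For the comparison principle, the paper works with upper and lower parabolic $\varepsilon$-envelopes and Alexandroff's estimate on the contact set of the convex envelope of $v_\varepsilon-u^\varepsilon$; strict ellipticity is used to force $D^2v_\varepsilon=D^2u^\varepsilon$ a.e.\ on that set, whence the Alexandroff integral vanishes. Your route---perturb the subsolution to $w_\e=w+\e(|x|^2-C_\Omega)$ and invoke the Jensen--Ishii lemma to obtain $X\le Y$ with $F(X-2\e I)\ge\psi$, $F(Y)\le\psi$, and then use strict monotonicity of $F$ on the \emph{strict} matrix inequality $X-2\e I<Y$---is the standard Crandall--Ishii--Lions argument and is equally valid here; it is arguably the more streamlined of the two.

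For the barriers, your construction with $\mu(d)$, $\nu(d)$ built from the defining function of an exterior tangent ball is correct but more elaborate than necessary. The paper simply takes, at a boundary point placed at the origin with inner normal $e_n$, the quadratics $\underline w=-Cx_n+\tfrac12|x|^2\tan(\psi^*/n)$ and $\overline w=Cx_n+\tfrac12|x|^2\tan(\psi_*/n)$, where $\psi_*<\psi<\psi^*$ are constants in $(-n\pi/2,n\pi/2)$; these have constant Hessians with $F$-values $\psi^*$ and $\psi_*$, and uniform convexity gives $x_n\ge c|x'|^2$ on $\partial\Omega$, which is all that is needed to trap a $C^2$ (or $C^{1,1}$) boundary datum between them by choosing $C$ large. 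Your barriers work, but the paper's are shorter.

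The remaining steps---Perron's supremum yielding a viscosity solution, and the Cauchy-in-$C^0$ approximation for general $\phi\in C^0(\partial\Omega)$---are carried out the same way in both arguments.
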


When the phase $\psi$ is constant, denoted by $c$, $u$ solves the special Lagrangian equation 
 \begin{equation}
\sum _{i=1}^{n}\arctan \lambda_{i}=c \label{s1}
\end{equation}
or equivalently,
\[ \cos c \sum_{1\leq 2k+1\leq n} (-1)^k\sigma_{2k+1}-\sin c \sum_{0\leq 2k\leq n} (-1)^k\sigma_{2k}=0.
\]
Equation (\ref{s1}) originates in the special Lagrangian geometry by Harvey-Lawson \cite{HL}. The Lagrangian graph $(x,Du(x)) \subset \mathbb {R}^n \times \mathbb {R}^n$ is called special
when the argument of the complex number $(1+i\lambda_1)...(1+i\lambda_n)$
or the phase $\psi$ is constant and it is special if and only if $(x,Du(x))$ is a (volume minimizing) minimal surface in $(\mathbb {R}^n \times \mathbb {R}^n,dx^2+dy^2)$ \cite{HL}.

A dual form of (\ref{s1}) is the Monge-Amp\'ere equation
\begin{equation*}
    \sum_{i=1}^n \ln\lambda_i=c.
\end{equation*}
This is the potential equation for special Lagrangian submanifolds in $(\mathbb {R}^n \times \mathbb {R}^n, dxdy)$ as interpreted in \cite{Hi}. The gradient graph $(x,Du(x))$ is volume maximizing in this pseudo-Euclidean space as shown by Warren \cite{W}. In the 1980s, Mealy \cite{Me} showed that an equivalent algebraic form of the above equation is the potential equation for his volume maximizing special Lagrangian submanifolds in $(\mathbb {R}^n \times \mathbb {R}^n, dx^2-dy^2)$.

A key prerequisite for the smooth solvability of the Dirichlet problem for fully nonlinear, elliptic equations is the concavity of the operator on the space of symmetric matrices.
The arctangent operator or the logarithmic operator is concave if $u$ is convex, or if the Hessian of $u$ has a lower bound $\lambda\geq 0$. Certain concavity properties of the arctangent operator are still preserved for saddle $u$. The concavity of the arctangent operator in (\ref{lab}) depends on the range of the Lagrangian phase. The phase $(n-2)\frac{\pi}{2}$ is called critical because the level set $\{ \lambda \in \mathbb{R}^n \vert \lambda$ satisfying $ (\ref{lab})\}$ is convex only when $|\psi|\geq (n-2)\frac{\pi}{2}$ \cite[Lemma 2.2]{YY}. The concavity of the level set is evident for $|\psi|\geq (n-1)\frac{\pi}{2}$ since that implies $\lambda>0$ and then $F$ is concave. For a supercritical phase $|\psi|\geq(n-2)\frac{\pi}{2}+\delta$ the operator $F$ can be extended to a concave operator \cite{CPX, CW}.

The Dirichlet problem for fully nonlinear, elliptic equations of the form $ F(\lambda[D^2u])=\psi(x)$ was studied by Caffarelli-Nirenberg-Spruck in \cite{CNS}, where they proved the existence of
classical solutions under various hypotheses on the function $F$ and the domain. Their results extended the work of Krylov \cite{kry1},  Ivo\v{c}kina \cite{Ivo1}, and their previous work \cite{CNS1} on equations of Monge-Ampère type. For the Monge-Ampère equation, continuous boundary data leads to only Lipschitz continuous solutions; Pogorelov \cite{P2} constructed his famous
counterexamples for the three dimensional Monge-Ampère equation $\sigma_3(D^2u) = \det(D^2u) = 1$, which also
serve as counterexamples for cubic and higher order symmetric $\sigma_k$ equations. In \cite{Tru}, Trudinger proved existence and a priori estimates of smooth solutions of fully nonlinear equations of the type of Hessian equations. In \cite{NT}, Ivo\v{c}kina-Trudinger-Wang studied the Dirichlet problem for a class of fully nonlinear, degenerate elliptic equations which depend only on the eigenvalues of the Hessian matrix. In \cite{HL0}, Harvey-Lawson studied the Dirichlet problem for fully nonlinear, degenerate elliptic equations of the form $F(D^2u)=0$ on a smoothly bounded domain in $\mathbb{R}^n$. Interior regularity for viscosity solutions of (\ref{s1}) with critical and supercritical constant phase $ |\psi|\geq(n-2)\frac{\pi}{2}$ was shown by Warren-Yuan \cite{WY}
and Wang-Yuan \cite{WaY}. For a subcritical phase $ |\psi|<(n-2)\frac{\pi}{2}$, singular solutions of (\ref{s1}) were constructed by Nadirashvili-Vl\u{a}du\c{t} \cite{NV} and Wang-Yuan \cite{WangY}. The existence and uniqueness of continuous viscosity solutions to the Dirichlet problem for (\ref{s1}) with continuous boundary data was shown by Yuan \cite{Ynotes}. In \cite{BrW}, Brendle-Warren studied a second
boundary value problem for the special Lagrangian equation. 
In \cite{CPX}, Collins-Picard-Wu solved the Dirichlet problem (\ref{lab}) on a compact domain with $C^4$ boundary value under the assumption of the existence of a subsolution and a supercritical phase restriction. In \cite{RB}, Dinew-Do-T{\^o} showed the existence and uniqueness of a $C^0$ solution to (\ref{lab}) on a bounded $C^2$ domain with $C^0$ boundary value under the assumption of the existence of a subsolution and a supercritical phase restriction.

In Theorem \ref{main}, we assume $\psi\geq (n-2)\frac{\pi}{2}+\delta$ since
by symmetry $\psi\leq- (n-2)\frac{\pi}{2}-\delta$ can be treated similarly. The proof of Theorem \ref{main} follows from a standard continuity method and a uniform approximation of the $C^0$ boundary value. The major difficulty in proving uniform $C^{2,\alpha}$ estimates up to the boundary, which is necessary for the continuity method, is in estimating the double normal derivatives at the boundary without the aid of a given subsolution. We get around this by constructing a lower linear barrier function for $u_n$ by applying Trudinger's technique and a change of basis argument. Once we derive uniform $C^{2,\alpha}$ estimates up to the boundary, we use the a priori interior Hessian estimates proved in \cite{AB} to approximate the $C^0$ boundary value from which Theorem \ref{main} follows. In Theorem \ref{2.20}, we consider all values of the constant Lagrangian phase, which includes subcritical values. The main difficulty here is the lack of uniform ellipticity and concavity. The proof follows via Perron's method using an idea that was introduced by Ishii \cite{Ish2}
where we apply comparison principles for strictly elliptic\footnote{$F(D^2u)=\psi$ is strictly elliptic in the sense that $(F_{u_{ij}}(D^2u))>0$}, non concave, fully nonlinear equations \cite{yy2004}. In \cite{HL0}, Harvey-Lawson established the existence and uniqueness of continuous solutions of fully nonlinear, degenerate elliptic equations of the form $F(D^2u)=0$ on a smoothly bounded domain in $\mathbb{R}^n$ under an explicit geometric $F$‐convexity assumption on the boundary of the domain. The key ingredients of their proof were the usage of subaffine functions and Dirichlet duality. As an application, the continuous solvability of the constant phase equation (\ref{s1}) is obtained. In contrast, in Theorem \ref{2.20} of this paper, we focus only on the continuous solvability of the Dirichlet problem of equation (\ref{s1}) and provide a short proof that solely relies on a certain comparison principle.

\begin{remark}For Theorem \ref{main}, an assumption weaker than $C^{1}$ on $\psi$ will lead to counterexamples with continuous boundary data.
For example, in two dimensions, we consider a boundary value problem of (\ref{lab}) on the unit ball $B_1(0)$ where the phase is in $C^{\alpha}$ with $\alpha\in (0,1)$: $\psi(x)=\frac{\pi}{2}-\arctan (\alpha^{-1}|x|^{1-\alpha})$ and $u(x)=\int_{0}^{|x|}t^{\alpha}dt$ on $\partial B_1$. This problem admits a non $C^2$ viscosity solution $u$ with gradient $D u=|x|^{\alpha-1}x$, thereby proving a contradiction. If the Lagrangian phase is subcritical, i.e. $ |\psi|<(n-2)\frac{\pi}{2}$, then even for the constant phase equation (\ref{s1}) with analytic boundary data, $C^0$ viscosity solutions may only be $C^{1,\varepsilon_{0}}$ but no more as shown by Wang-Yuan \cite{WangY}.\\
However, the existence of $C^{2,\alpha}$ solutions to (\ref{lab}) with critical and supercritical phase, i.e. $|\psi|\geq (n-2)\frac{\pi}{2}$, where $\psi\in C^{1,\varepsilon_{0}}$ or even or even $|\psi|\geq (n-2)\frac{\pi}{2}$ where $\psi\in C^{1,1}$, are still open questions. As of now, it is also unknown if $C^0$ viscosity solutions of (\ref{s1}) are Lipschitz for subcritical phases.
\end{remark}

\begin{remark}
 In Theorem \ref{2.20}, if we replace the constant phase with any continuous function lying in the subcritical or critical range, then the existence and uniqueness of $C^{0}$ viscosity solutions of (\ref{lab}) remain open questions. This is due to the lack of a suitable comparison principle for strictly elliptic, non concave, fully nonlinear equations with a variable right hand side. In \cite{hl1}, Harvey-Lawson introduced a condition called “tameness” on the operator $F$, which is a little stronger than strict ellipticity and allows one to prove comparison. In \cite{hl2}, they further proved that for the Lagrangian mean curvature equation, one can only show tamability in the supercritical phase interval. Recently in \cite{kev}, Cirant-Payne established comparison for this equation when the range of the phase is restricted to the intervals $((n-2k)\frac{\pi}{2},(n-2(k-1))\frac{\pi}{2})$ where $1\leq k\leq n$. This in turn solves the Dirichlet problem on these intervals as shown in \cite[Theorem 6.2,C]{hl2}.
 \\
 For $\sigma_k$ equations with a variable right hand side, results analogous to Theorem \ref{2.20} exist. This is due to the fact that the linearized operator has a positive lower bound in determinant unlike the Lagrangian mean curvature equation (\ref{lab}).
 \end{remark}

This article is divided into the following sections: in section two, we state some well known algebraic and trigonometric inequalities satisfied by solutions of (\ref{lab}). In section three, we prove $C^{2,\alpha}$ estimates up to the boundary assuming $C^4$ boundary data. In section four, we first solve the Dirichlet problem with $C^{4}$ boundary data using the method of continuity and then combine it with the Hessian estimates proved in \cite{AB} to solve the Dirichlet problem with continuous boundary data. In section five, we prove Theorem \ref{2.20}. In section six (appendix), we state a well known linear algebra Lemma that we use in estimating the Hessian of $u$ on the boundary and we provide the proof of a certain comparison principle that is essential for the proof of Theorem \ref{2.20}.\\

\textbf{Acknowledgments.} The author is grateful to Y.Yuan for his guidance, support, and several useful discussions. The author is grateful to R.Harvey and B.Lawson for their insightful feedback on the comparison principle. The author thanks R.Shankar and M.Warren for helpful comments and suggestions. 

\section{Preliminaries}

The induced Riemannian metric on the Lagrangian submanifold $\{(x,Du(x))|x\in \Omega\}\subset \mathbb {R}^n \times \mathbb {R}^n$ is given by
\[g=I_n+(D^2u)^2.
\]
On taking the gradient of both sides of the Lagrangian mean curvature equation (\ref{lab}) we get
\begin{equation}
\sum_{a,b=1}^{n}g^{ab}u_{jab}=\psi_j \label{linearize}
\end{equation}
where $g^{ab}$ is the inverse of the induced Riemannian metric $g$. From \cite[(2.19)]{HL} we see that the mean curvature vector $\vec{H}$ of this Lagrangian submanifold $\{(x,Du(x))|x\in\Omega\}$
is given by $\vec{H}=J\nabla_g\psi $ where $\nabla_g$ is the gradient operator for the metric $g$ and $J$ is the complex structure, or the $\frac{\pi}{2}$ rotation matrix in  $\mathbb{R}^n\times \mathbb{R}^n$. Next we state the following Lemma.
\begin{lemma}\label{y1}
		Suppose that the ordered real numbers $\lambda_{1}\geq \lambda_{2}\geq...\geq \lambda_{n}$ satisfy (\ref{lab}) with $\psi\geq (n-2)\frac{\pi}{2}$.
		Then we have \begin{enumerate}
			\item $\lambda_{1}\geq \lambda_{2}\geq...\geq \lambda_{n-1}>0, \lambda_{n-1}\geq |\lambda_{n}|$,
			\item $\lambda_{1}+(n-1)\lambda_{n}\geq 0$,
			\item $\sigma_{k}(\lambda_{1},...,\lambda_{n})\geq 0$ for all $1\leq k\leq n-1$ and $n\geq 2$,
			\item  if $\psi\geq (n-2)\frac{\pi}{2}+\delta$, then $D^2u\geq -\cot \delta I_n$.
			
		\end{enumerate}
		
	\end{lemma}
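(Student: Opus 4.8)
The plan is to pass from $D^2u$ to the phase angles $\theta_i:=\arctan\lambda_i$, which are ordered $\theta_1\ge\cdots\ge\theta_n$, lie in $(-\tfrac\pi2,\tfrac\pi2)$, and satisfy $\sum_{i=1}^n\theta_i=\psi\ge(n-2)\tfrac\pi2$. The single mechanism driving (1), (2), (4) is that any $m\le n-2$ of the angles sum to \emph{strictly} less than $m\tfrac\pi2$, so whichever block of angles remains is forced upward by the phase constraint. Throughout one may assume $\lambda_n<0$, since if $\lambda_n\ge0$ all four claims are immediate.

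First I would dispose of (1) and (4). Since $\sum_{i=1}^{n-2}\theta_i<(n-2)\tfrac\pi2$, we get $\theta_{n-1}+\theta_n=\psi-\sum_{i=1}^{n-2}\theta_i>0$, and as $\theta_{n-1}\ge\theta_n$ this forces $\theta_{n-1}>0>\theta_n$ and $\theta_{n-1}>-\theta_n$; that is, $\lambda_1\ge\cdots\ge\lambda_{n-1}>0$ and $\lambda_{n-1}>|\lambda_n|$, which is (1). For (4), $\theta_n=\psi-\sum_{i=1}^{n-1}\theta_i>(n-2)\tfrac\pi2+\delta-(n-1)\tfrac\pi2=\delta-\tfrac\pi2$, so $\lambda_n>\tan(\delta-\tfrac\pi2)=-\cot\delta$, i.e.\ $D^2u\ge-\cot\delta\,I_n$. (These two are essentially \cite[Lemma 2.2]{YY}.)

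For (2) I would set $-\theta_n=\tfrac\pi2-\nu$ with $\nu\in(0,\tfrac\pi2)$. Since $\theta_1$ is the largest angle, $\theta_1\ge\frac1{n-1}\sum_{i=1}^{n-1}\theta_i=\frac1{n-1}(\psi-\theta_n)\ge\tfrac\pi2-\tfrac\nu{n-1}$, so $\lambda_1\ge\cot\tfrac\nu{n-1}$ while $|\lambda_n|=\cot\nu$. Hence $\lambda_1+(n-1)\lambda_n\ge0$ reduces to $\tan\nu\ge(n-1)\tan\tfrac\nu{n-1}$ on $(0,\tfrac\pi2)$, which is exactly convexity of $\tan$ there applied to $\tfrac\nu{n-1}=\tfrac1{n-1}\nu+\tfrac{n-2}{n-1}\cdot 0$.

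The substantive step is (3). Put $\hat\lambda=(\lambda_1,\dots,\lambda_{n-1})$, whose entries are positive by (1). From $\sigma_k(\lambda)=\sigma_k(\hat\lambda)-|\lambda_n|\,\sigma_{k-1}(\hat\lambda)$ it suffices to show $\sigma_k(\hat\lambda)\ge|\lambda_n|\,\sigma_{k-1}(\hat\lambda)$ for $1\le k\le n-1$; since the ratios $\sigma_k(\hat\lambda)/\sigma_{k-1}(\hat\lambda)$ are non-increasing in $k$ (a standard consequence of Newton's inequalities for the positive vector $\hat\lambda$), it is enough to handle $k=n-1$, where $\sigma_{n-1}(\hat\lambda)/\sigma_{n-2}(\hat\lambda)=\big(\sum_{i=1}^{n-1}1/\lambda_i\big)^{-1}$. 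Thus all of (3) comes down to $\sum_{i=1}^n\frac1{\lambda_i}\le0$. Writing $\frac1{\lambda_i}=\cot\theta_i=\tan(\tfrac\pi2-\theta_i)$, the angles $a_i:=\tfrac\pi2-\theta_i$ for $i\le n-1$ lie in $(0,\tfrac\pi2)$ and satisfy $\sum_{i=1}^{n-1}a_i=(n-1)\tfrac\pi2-(\psi-\theta_n)\le\tfrac\pi2+\theta_n<\tfrac\pi2$, so superadditivity of $\tan$ on $(0,\tfrac\pi2)$ yields $\sum_{i=1}^{n-1}\frac1{\lambda_i}=\sum_{i=1}^{n-1}\tan a_i\le\tan(\tfrac\pi2+\theta_n)=\frac1{|\lambda_n|}$, which is the claim. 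I expect the only real friction to be in assembling (3): checking that Newton's inequalities genuinely let one reduce to $k=n-1$, and arranging the superadditive-$\tan$ estimate so that every partial sum stays in $(0,\tfrac\pi2)$. Parts (1), (2), (4) are one-line angle computations, the sole caveat being the borderline case $n=2$, $\psi=(n-2)\tfrac\pi2$, where the strict inequality $\lambda_{n-1}>0$ in (1) should be read in the obvious generalized sense.
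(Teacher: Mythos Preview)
Your argument is correct. The paper itself does not prove the lemma but simply cites \cite[Lemma~2.1]{WaY} for parts (1)--(3) and \cite[p.~1356]{YY} for part (4); the angle-sum mechanism you use for (1), (2), and (4) is exactly what those references do, so you have effectively supplied a self-contained version of the cited proofs. For (3) your organization---reducing, via the log-concavity of the $\sigma_k(\hat\lambda)$ for the positive vector $\hat\lambda$, to the single inequality $\sum_{i=1}^{n}1/\lambda_i\le 0$, and then proving that by superadditivity of $\tan$ on $(0,\pi/2)$---is a clean alternative packaging. The only point worth making fully explicit is the one you already flag: the iterated superadditivity of $\tan$ requires every partial sum of the $a_i$ to stay in $(0,\pi/2)$, and this is guaranteed because the $a_i$ are positive with total sum at most $\pi/2+\theta_n<\pi/2$. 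Your caveat about the borderline case $n=2$, $\psi=0$ is also apt.
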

\begin{proof}
Properties (1), (2), and (3) follow from \cite[Lemma 2.1]{WaY}. Property (4) follows from \cite[Pg 1356]{YY}.
\end{proof}

	\section{$C^{2,\alpha}$ estimate up to the boundary}
	We first prove the following $C^{2,\alpha}$ estimate up to the boundary of $\Omega$. 
	\begin{theorem}
		\label{2}
		Let $\phi\in C^{4}(\overline \Omega)$ and $\psi:\overline \Omega\rightarrow [(n-2)\frac{\pi}{2}+\delta, n\frac{\pi}{2})$ be in $C^{2,\alpha}(\overline \Omega)$, where $\Omega$ is a uniformly convex domain in $\mathbb{R}^{n}$ and $\delta>0$.
		Then there exists a universal constant $\alpha\in(0,1)$ such that if $u\in C^{4,\alpha}(\overline \Omega)$ is a solution of (\ref{lab}), then
		\begin{equation}||u||_{C^{2,\alpha}(\overline \Omega)}\leq C(||\psi||_{C^{1,1}(\overline \Omega)}, ||\phi||_{C^{4}(\overline \Omega)},n, \delta,\overline{\Omega}). \label{bdy}
		\end{equation}
	\end{theorem}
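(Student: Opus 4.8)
\medskip
\noindent\textbf{Proof proposal.} The plan is to run the Caffarelli--Nirenberg--Spruck scheme of a priori estimates: a $C^0$ bound, a global gradient bound, a global bound on $D^2u$ reduced to a bound on $\partial\Omega$, a boundary bound on $D^2u$, and finally the global $C^{2,\alpha}$ bound via Evans--Krylov-type estimates once the equation has been made uniformly elliptic. For the $C^0$ bound, note $\psi\le\psi_{\max}:=\sup_{\overline\Omega}\psi<n\tfrac{\pi}{2}$, so a quadratic $\underline u(x)=a|x|^2+\ell(x)$ with $n\arctan(2a)\ge\psi_{\max}$ is a subsolution of \eqref{lab}; choosing the affine part $\ell$ so that $\underline u\le\phi$ on $\partial\Omega$ and using the comparison principle gives $u\ge\underline u$, while Lemma~\ref{y1}(3) with $k=1$ gives $\Delta u=\sigma_1\ge0$, so $u$ is subharmonic and $\sup_{\overline\Omega}u=\max_{\partial\Omega}\phi$. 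Each $u_k$ solves the linearized equation $\sum_{a,b}g^{ab}(u_k)_{ab}=\psi_k$, so the maximum principle reduces $|Du|\le C$ to a bound on $\partial\Omega$, and the latter follows from linear barriers built from the uniform convexity of $\Omega$, $\|\phi\|_{C^2}$, the subsolution $\underline u$, and the subharmonicity of $u$.

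For the second order bound, Lemma~\ref{y1}(4) already gives the one-sided bound $D^2u\ge-\cot\delta\,I_n$ on $\overline\Omega$, so it suffices to bound $\sup_{\overline\Omega}\sup_{|e|=1}u_{ee}$ from above. Differentiating \eqref{lab} twice in a direction $e$ and using that on the supercritical range $F$ extends to a concave operator \cite{CPX,CW} (so that the second-order term $-F_{u_{ij},u_{kl}}u_{ije}u_{kle}$ has the favorable sign), a standard argument shows this supremum is controlled by its value on $\partial\Omega$ plus terms depending on $\|\psi\|_{C^{1,1}}$, $\mathrm{diam}\,\Omega$, and the gradient bound. At a boundary point, rotate so that $e_n$ is the inner normal: the pure tangential second derivatives $u_{\alpha\beta}$, $\alpha,\beta<n$, are expressed through the intrinsic Hessian of $\phi$ on $\partial\Omega$ and $u_n$ times the second fundamental form of $\partial\Omega$, hence are bounded by $\|\phi\|_{C^2}$ and the gradient bound; the mixed derivatives $u_{\alpha n}$, $\alpha<n$, are bounded by applying a tangential vector field to $u-\phi$ on $\partial\Omega$ and comparing against a barrier for $\sum g^{ab}\partial_{ab}$ adapted to the uniform convexity of $\Omega$.

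It remains to bound $\max_{\partial\Omega}u_{nn}$ from above, and this is the main obstacle: in the classical treatment this is exactly where one invokes a strict subsolution, and here none is available. I would instead follow Trudinger's technique and construct a barrier, linear in the normal variable, that bounds the normal derivative $u_n$ from below near the relevant boundary point, built from the linearized equation $\sum g^{ab}(u_n)_{ab}=\psi_n$ and the tangential and mixed second derivative bounds just obtained; here a change of basis diagonalizing the tangential Hessian is used, and the algebraic structure in Lemma~\ref{y1} (in particular $\lambda_{n-1}\ge|\lambda_n|$, $\lambda_1+(n-1)\lambda_n\ge0$, and $\sigma_k\ge0$) is precisely what forces the barrier's differential inequality to have the correct sign. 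A Hopf-type comparison at the boundary point then converts the barrier into the bound on $u_{nn}$ there. Together with $D^2u\ge-\cot\delta\,I_n$ this yields $\|u\|_{C^2(\overline\Omega)}\le C$. I expect the delicate part to be making this barrier construction, and the accompanying change of basis, work uniformly in the absence of a subsolution.

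Finally, with $\|D^2u\|_{L^\infty(\overline\Omega)}\le C$ and $D^2u\ge-\cot\delta\,I_n$, all eigenvalues of $D^2u$ lie in a fixed compact subinterval of $\mathbb{R}$, so the eigenvalues $1/(1+\lambda_i^2)$ of $F_{u_{ij}}(D^2u)$ lie in $[\theta,1]$ for some $\theta=\theta(n,\delta,C)>0$: the equation is uniformly elliptic, and on this range it is smooth and concave with $\psi\in C^{1,1}\subset C^{\alpha}$. Evans--Krylov then gives interior $C^{2,\alpha}$ estimates with a universal $\alpha$, and Krylov's boundary $C^{2,\alpha}$ estimate, using $\phi\in C^{4}$ and the smoothness and uniform convexity of $\partial\Omega$, upgrades this to the global bound \eqref{bdy}.
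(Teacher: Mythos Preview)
Your outline follows essentially the same Caffarelli--Nirenberg--Spruck scheme as the paper: $C^0$ via quadratic barriers, gradient reduced to $\partial\Omega$, Hessian reduced to $\partial\Omega$ via the concave extension of $F$, then tangential, mixed, and double-normal boundary estimates, and finally Evans--Krylov together with Krylov's boundary $C^{2,\alpha}$. Minor differences: you use $\Delta u\ge 0$ for the upper $C^0$ bound and the linearized equation for the gradient reduction, whereas the paper uses an explicit quadratic supersolution $B^+$ and the semi-convexity $D^2u\ge -\cot\delta\,I_n$ directly (your maximum-principle reduction needs a small correction since $g^{ab}(u_k)_{ab}=\psi_k$ has a source term, but this is easily absorbed).

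The place where your sketch drifts from the paper is the double-normal step. You say the barrier's differential inequality is forced by the algebraic facts in Lemma~\ref{y1} ($\lambda_{n-1}\ge|\lambda_n|$, $\lambda_1+(n-1)\lambda_n\ge0$, $\sigma_k\ge0$), but those play no role here. The paper's mechanism is: (i) locate the boundary point $x_0'$ where the \emph{tangential} phase $\tilde\Theta(\lambda')=\sum_{i=1}^{n-1}\arctan\lambda_i'-\psi$ is minimized; since $\tilde\Theta(\lambda')>(n-3)\tfrac{\pi}{2}$, the $(n-1)$-dimensional level set is convex, so its supporting hyperplane yields a linear lower bound for $tr(A_{ij}(\lambda'_0)D^2u|_T)$ along $\partial\Omega$, which after rewriting via the second fundamental form gives a \emph{lower linear barrier for $u_\gamma$ on $\partial\Omega$} with equality at $x_0'$; (ii) a change of basis turns this into a lower linear barrier for $u_n$; (iii) the Hopf-type barrier (with the subsolution $B^-$ supplying the interior inequality, not Lemma~\ref{y1}) then bounds $u_{nn}(x_0')$; (iv) finally, a separate algebraic step using Lemma~\ref{3.9} propagates the bound from $x_0'$ to every $x\in\partial\Omega$. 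Your proposal omits (iv) entirely and misidentifies the source of (i)--(iii); you correctly flag this as ``the delicate part,'' but the actual ingredients are the convexity of the tangential level set and the minimizing-point trick, not the inequalities you cite.
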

	
	\begin{proof}
	We first make the following observation, which will be used for steps 1,2,3.2, and 3.3 below.\\
	We pick an arbitrary boundary point $x_{0}\in\partial\Omega$. By a rotation and translation we choose a co-ordinate system such that the chosen boundary point is the origin and $\Omega$ lies above the hyperplane $\{x_{n}=0\}$ with $e_n$ as the inner unit normal at $0$. For such a domain, we can write \begin{equation}
	    \partial\Omega=\{(x',x_n)|x_n=h(x')=\frac{1}{2}(k_1x_1^2+...+k_{n-1}x_{n-1}^2)+o(|x'|^2)\}. \label{lala}
	\end{equation}
	
	 At $0\in\partial \Omega$ the boundary value satisfies \begin{align}
	\phi(x',x_n)=\phi(x',h(x'))=\phi(0)+\phi_{x'}(0)x'\nonumber\\
	+\phi_{x_n}(0)h(x')+\phi_{x'x'}(0)x'x'+\phi_{x_nx_n}(0)h(x')h(x')+o(|x'|^2+h^2(x'))\nonumber\\
	=Q(x)+o(1)|x'|^2 \nonumber
	\end{align}
	where $Q(x)$ is a quadratic. So there exists $C_0=C_0(||\phi||_{C^2(\partial{\Omega})}, n,\kappa)$ such that 
	\begin{align}
	L^-=-C_0x_n\leq \phi\leq C_0x_n=L^+ \text{ on $\partial\Omega$}. \label{lala1}
	\end{align}\\
	We now prove estimate (\ref{bdy}) in the following four steps. We will estimate all the boundary derivatives of $u$ at the origin. 
		\begin{itemize}
		\item[Step 1.] Bound for $||u||_{L^{\infty}(\overline{\Omega})}$.
		\begin{claim} We show the following
		\begin{equation}
		||u||_{L^{\infty}(\overline{\Omega})}\leq C(||\phi||_{C^{2}(\overline \Omega)},n, |\partial \Omega|_{C^2}) .\label{est1}
		\end{equation} 
		\end{claim}
		\begin{proof}The function $\psi:\overline \Omega\rightarrow [(n-2)\frac{\pi}{2}+\delta, n\frac{\pi}{2})$ is in $C^{1,1}(\overline \Omega)$, so there exists $\varepsilon>0$ such that $\psi<n\frac{\pi}{2}-\varepsilon$. Fixing this $\varepsilon$ we define  $\underline{\psi}=(n-2)\frac{\pi}{2}+\delta$ and $\overline{\psi}=n\frac{\pi}{2}-\varepsilon$.\\
		Recalling (\ref{lala1}) we find constants $c_0$ and $C'_0$ depending on $C_0$ above such that on $\partial \Omega$, we have 
		\begin{equation*}
		-c_0|x|^2+ \frac{1}{2}|x|^2\tan\frac{\overline{\psi}}{n}=-C'_0|x|^2=-C_0x_n\leq \phi \leq C'_0|x|^2+ \frac{1}{2}|x|^2\tan\frac{\underline{\psi}}{n}.
		\end{equation*}
		Using relation (\ref{lala}) we define
		\begin{align}
		-Cx_n+ \frac{1}{2}|x|^2\tan\frac{\overline{\psi}}{n}
		=B^-\label{bn}\\ 
		Cx_n+ \frac{1}{2}|x|^2\tan\frac{\underline{\psi}}{n}
		=B^+.\label{bbnn}
		\end{align} where $C=C(||\phi||_{C^2(\partial{\Omega})}, n,\kappa_i)$.
		We observe that
		\begin{align}
		F(D^{2}B^-)\geq F(D^{2}u) \geq F(D^{2}B^+) \text{ in $\Omega$}\nonumber\\
		B^-\leq u\leq B^+ \text{ on $\partial\Omega$ with equality holding at $0$.} \label{lebu}
\end{align}
		Using comparison principles we see that (\ref{est1}) holds.
	\end{proof}
	\item[Step 2.] Bound for $||Du||_{L^{\infty}(\overline{\Omega})}$.
		\begin{claim} We show the following
			\begin{equation}
			||Du||_{L^{\infty}(\overline{\Omega})}\leq C(||\psi||_{C^{1}(\overline \Omega)},||\phi||_{C^{2}(\overline \Omega)},n, \delta,|\partial \Omega|_{C^2}). \label{1.1}
			\end{equation}
		\end{claim}
	\begin{proof}
		
		On linearizing (\ref{lab}), we get (\ref{linearize}) and since $\psi\in C^{1,1}(\overline{\Omega})$, we see that $|g^{ij}\partial_{ij}u_e|\leq C(|\psi|_{C^1(\overline{\Omega})})$. From Lemma (\ref{y1}), we see that u is semi-convex, i.e. $D^2u\geq -\cot\delta I_n$. We modify $u$ to the convex function $u+\cot\delta \frac{|x|^2}{2}$ from which we see that $Du(x)+\cot\delta x$ attains its supremum on the boundary of $\Omega$. So we have
		\begin{equation}
		\sup_{\bar{\Omega}}Du(x)=\sup_{\partial \Omega}Du(x)+\cot\delta. \label{1h}
		\end{equation}
		For $1\leq i\leq n-1$, $u_{i}=\phi_{i}$, so we only need to estimate $u_{n}(0)$. 
		Recalling (\ref{lebu}), we 
		again use comparison principles and on taking the normal derivative at $0$, we get
		\[|u_{n}(0)|\leq C(||\psi||_{C^{1}(\overline \Omega)},||\phi||_{C^{2}(\overline \Omega)},n,|\partial \Omega|_{C^2}).
		\]
		Combining (\ref{1h}) with the above we get (\ref{1.1}).
	\end{proof}
	\item[Step 3.] Bound for $||D^{2}u||_{L^{\infty}(\overline \Omega)}$. 
		\begin{claim} We prove the following 
		\begin{equation}||D^{2}u||_{L^{\infty}(\overline \Omega)}\leq C(||\psi||_{C^{1,1}(\overline \Omega)}, ||\phi||_{C^{4}(\overline \Omega)},n,\delta,|\partial \Omega|_{C^4}) .\label{3}
		\end{equation}
	\end{claim}

	The proof of the above claim follows from the following steps.
	
		\item[Step 3.1] We first prove that the Hessian attains its supremum on the boundary of $\Omega$. We show that
		\begin{equation}
		||D^{2}u||_{L^{\infty}(\overline \Omega)}\leq C(||\psi||_{C^{1,1}(\overline \Omega)}, ||D^{2}u||_{L^{\infty}(\partial \Omega)},\delta). \label{4}
		\end{equation}
		We differentiate (\ref{lab}) twice and since the phase is supercritical we modify the operator $F$ to a concave operator $\Tilde{F}$ as shown in \cite[pg 347]{CW}. We see that
		\begin{align*}\Tilde{F}^{ij}\partial_{ij}u_{ee}+\Tilde{F}^{ij,kl}\partial_{ij}u_{e}\partial_{kl}u_{e}=\psi_{ee}\\
		\Tilde{F}^{ij}\partial_{ij}\Delta u=\Delta \psi -\sum_{e}\Tilde{F}^{ij,kl}\partial_{ij}u_{e}\partial_{kl}u_{e}\geq \Delta \psi .
		\end{align*}
		The last inequality follows from the concavity of the operator. Let $p_0$ be an interior point of $\Omega$. By an orthogonal transformation, we assume $D^2u$ to be diagonalized at $p_0$. We observe that \begin{align*}
		g^{ij}\partial_{ij}(\Delta u+\frac{C_{1}}{2}|x|^{2})(p_0)\geq \\
		-C(||\psi||_{C^{1,1}(\Omega)})+C_1\sum_{i=1}^n\frac{1}{1+\lambda_i^2}>0
	\end{align*}
	where $C_1$ is chosen large enough using the semi-convexity of $u$. 
		This shows that $D^2u$ attains its supremum on the boundary. 
		Next, we estimate the Hessian on the boundary in the following steps.

			\item[Step 3.2] We estimate the double tangent derivative $u_{TT}$ on the boundary.\\
		That is we estimate $u_{ik}(0)$ on $\partial \Omega$ for $1\leq i,k\leq n-1$. 
        There exists a constant $a>0$ for which $(x_1,...,x_{n}-a)$ is orthogonal to $\partial \Omega$ near $0$. Consider the following tangential derivative near $0\in \partial \Omega$
         \[\partial_{T_{k} }u(x)=(a-x_n)u_k(x)+x_ku_n(x).
        \]
        For $1\leq k\leq n-1$ we have 
        $\partial_ {T_{k}}u|_{\partial \Omega}=\partial _{T_{k}}\phi|_{\partial \Omega}$.
        So for $1\leq k,i\leq n-1$, we have
        \begin{align*}
            \partial_ {T_{k}}u(0)=au_k(0)\\
           \partial_ {T_{i}} \partial _{T_{k}}u(0)=au_{ki}(0)+\delta_{ki} u_n(0)\\
           \implies \partial_{T_{i}} \partial_ {T_{k}}\phi(0)=au_{ki}(0)+\delta_{ki} u_n(0).
        \end{align*}
		Using the estimate in step 2, we have \begin{equation*}|u_{ki}(0)|\leq C(||\psi||_{C^{1}(\overline \Omega)}, ||\phi||_{C^{2}(\overline \Omega)},n,\delta,\Omega). 
		\end{equation*}

		\item[Step 3.3] We estimate the mixed tangent normal derivative $u_{TN}$ on the boundary.\\
		That is we estimate $u_{in}(0)$ on $\partial \Omega$ for $1\leq i\leq n-1$.
		Let $\tau$ be a vector field generated by rotation such that $\tau(0)=e_i$ for $i<n.$  Note that $u_\tau=\phi_\tau$ on $\partial \Omega$ and $g^{ij}\partial_{ij}u_\tau=\psi_\tau$ in $\Omega$.\\
		Applying the argument in (\ref{lala1}) we get the following on $\partial \Omega$ 
		\begin{align}
		    |\phi_{\tau}|\leq C(||\phi_\tau||_{C^{2}(\overline{\Omega})},||\psi||_{C^{1}(\overline \Omega)},n,|\partial \Omega|_{C^2})x_n 
		    \leq C|x|^{2}\label{nn}.
		\end{align}
		Using the above we choose a constant $c>0$ depending on $C$ above such that 
	\begin{equation*}
		-c|x|^2+ \frac{1}{2}|x|^2\tan\frac{\overline{\psi}}{n}=-C|x|^2\leq \phi \leq Cx_n+ \frac{1}{2}|x|^2\tan\frac{\underline{\psi}}{n} \text{ on $\partial\Omega$}.
		\end{equation*}
		We define $u_0$ to be the subsolution
		\[u_0=-C'x_n+ \frac{1}{2}|x|^2\tan\frac{\overline{\psi}}{n}\]
		where $C'=C'(||\phi||_{C^{3}(\overline{\Omega})},||\psi||_{C^{1}(\overline \Omega)},n,|\partial \Omega|_{C^2})$. Let $w=u-u_0$. Since the phase lies in the supercritical range, we again extend the operator $F$ to a concave operator. Using concavity we get the following for some $\varepsilon_0>0$
		on a small ball of radius $r$ around the origin  \begin{align}
		g^{ij}w_{ij}\leq-\varepsilon_{0}  \text{ inside } \Omega \cap B_{r}(0)\nonumber \\
		w\geq 0 \text{ on } \partial(\Omega\cap B_{r}(0))\nonumber \\
		w(0)=0. \label{3.7} 
		\end{align} 
		We now choose $\alpha,\beta$ large such that 
		\begin{align}
		    g^{ij}\partial_{ij}(\alpha w+\beta |x|^2 \pm u_\tau)\leq 0 \text{ in } \Omega \cap B_{r}(0) \label{i}\\
		    \alpha w+\beta|x|^2 \pm u_\tau\geq 0 \text{ on } \partial(\Omega\cap B_{r}(0)).\nonumber
		\end{align} 
		Since $w\geq 0$ on $\partial(\Omega\cap B_{r}(0))$ we only need to choose $\beta$ large such that \[\beta|x|^2\pm u_\tau\geq0 \text{ on } \partial(\Omega\cap B_{r}(0)).\] 
		
		    We observe that on $\Omega\cap \partial B_{r}(0)$,
		    $\beta \geq \frac{C}{r^{2}}$
		 where $C= C(||\psi||_{C^{1}(\overline \Omega)}, ||\phi||_{C^{2}(\overline \Omega)},\delta,n,|\partial \Omega|_{C^2}) $ is from the gradient estimate in (\ref{1.1}). And on applying (\ref{nn}) we get the required value of $\beta$ on  $\partial \Omega\cap  B_{r}(0)$.
		Fixing the larger of the two values to be the constant $\beta$ we now choose $\alpha$ such that (\ref{i}) holds good. We have 
		\begin{align*}
		    g^{ij}\partial_{ij}(\alpha w+\beta|x|^2 \pm u_\tau)\leq -\alpha\varepsilon_{0} +C 
		    \end{align*} 
		    where $C=C(\beta,|\psi|_{C^{1}(\overline{\Omega})})$. We choose $\alpha$ large such that
		 $-\alpha\varepsilon_{0} +C\leq 0$.
	Observe that $\alpha w+\beta|x|^2\pm u_\tau(0)=0$ at $0$. Using Hopf's Lemma we see that 
	\begin{align*}
	    \partial_n(\alpha w+\beta|x|^2\pm u_\tau)(0)\geq 0\\
	   \implies \pm u_{\tau_{n}(0)}\geq \mp \partial_n(\alpha w+\beta|x|^2\pm u_\tau)(0)\\
	    \implies |u_{\tau_{n}}(0)|\leq |\alpha w_n(0)|\leq C.
	\end{align*}
		Therefore, we have \begin{equation*}|u_{in}(0)|\leq C(||\psi||_{C^{1,1}(\overline \Omega)}, ||\phi||_{C^{3}(\overline \Omega)},n,\delta,|\partial \Omega|_{C^2}). 
		\end{equation*}
		\item[Step 3.4] Lastly, we estimate the double normal $u_{NN}$ on the boundary.\\
		Note that by Lemma \ref{y1}, $D^2u$ is bounded below, so we only need to prove an upper bound for $u_{NN}$, which we find using an idea of Trudinger \cite{Tru}. \\Suppose that
$\lambda'$ denotes the eigenvalues of the $n-1\times n-1$ matrix $u_{TT}$ where the tangent vector $T$ acts as
	\[u_{TT}=\frac{1}{r^2}u_{\theta\theta}+\frac{1}{r}u_{r},
	\] when the boundary is a sphere. We denote
	\[D^2u = \begin{bmatrix} u_{TT} & u_{T\gamma} \\u_{\gamma T} & u_{\gamma \gamma}\end{bmatrix}=\begin{bmatrix} \lambda' & u_{T\gamma} \\u_{\gamma T} & u_{\gamma\gamma}\end{bmatrix}.\]
Let $x'_{0}$ be the minimal point of $\Tilde{\Theta}(\lambda')\vert_{\partial \Omega}$ where 
	\[\Tilde{\Theta}(\lambda')=\sum_{i=1}^{n-1}\arctan \lambda_i'-\psi
	\] and we denote $\lambda'_{0}=\lambda'(x'_0).$ Our goal is to find a lower linear barrier function for $u_\gamma$ at $x'_0$ followed by the same for $u_n$ at $x'_0$ with the help of a change of basis technique. Using this we will find an upper bound of $u_{nn}(x_0')$ followed by an upper bound of $u_{nn}(x)$ for all $x\in\partial\Omega$.\\
	Now we estimate the lower bound of $tr(D^2u)|_{T}=\sum_{i=1}^{n-1}\lambda'_{i}$. Observe that $\Tilde{\Theta}(\lambda')\geq\Tilde{\Theta}(\lambda'_0)>\psi-\frac{\pi}{2}>(n-3)\frac{\pi}{2}$. So the level set $\{\lambda'\in \mathbb{R}^{n-1}| \Tilde{\Theta}(\lambda')=\Tilde{\Theta}(\lambda'_0)\}$ should be convex. Heuristically, this property means the following: 
	\begin{align*}
	    \langle D\Tilde{\Theta}(\lambda'_0),\lambda' \rangle \geq \langle D\Tilde{\Theta}(\lambda'_0),\lambda'_0 \rangle=K_0 \text{ and } "="\text{ at } x'_0
	    \end{align*} where $K_0$ is a constant depending on $|\psi|_{C^{1}(\Omega)}, |\phi|_{C^2(\partial \Omega)}$, and $\delta$. Denoting \[\Bigg[\frac{\partial\Tilde{\Theta}(D^2u(x_0))|_{T}}{\partial D^2u|_T} \Bigg]=A_{ij}(\lambda'_0)\] where $1\leq i,j\leq n-1$,  we see that 
	    \begin{align*}
	    tr (A_{ij}(\lambda'_0)) (D^2u(x)|_T)\geq K_0 \text{ with equality holding at } x'_0.
	\end{align*}
	 Denoting the second fundamental form by $II$, we observe that
	 	\begin{align}
	D^2(u-\phi)|_T=(u-\phi)_{\gamma} II|_{\partial \Omega}\nonumber\\
	tr[A_{ij}(\lambda'_0)(D^2\phi|_T-\phi_\gamma II|_{\partial \Omega}+u_\gamma II|_{\partial \Omega})] \geq K_0 \text{ with equality holding at } x'_0. \nonumber
	  \end{align}
	This shows
	\begin{align}
	u_\gamma\geq \frac{1}{\sum_{i=1}^{n-1}\Tilde{\Theta}_i(\lambda'_0)\kappa_i(x')}[K_0-tr(A_{ij}(\lambda'_0)(D^2\phi|_T-\phi_{\gamma}II|_{\partial\Omega}))]\text{ with equality holding at } x'_0\label{above}\\
	\implies u_\gamma\geq C(|\phi|_{C^4(\overline{\Omega})}, |\partial \Omega|_{C^4}, |\psi|_{C^{1}(\Omega)},\delta) \text{ with equality holding at } x'_0\nonumber
	\end{align}
	where the last inequality follows from the observation that for all the terms in the LHS of (\ref{above}) one can find a lower linear barrier function whose Lipschitz norm depends on the $C^{3,1}$ norm of $\phi$ and the $C^1$ norm of $\psi$.
	Next, we consider a unit local basis at $x_0'$ denoted by $\mathcal{B}=\{e_n, e_{T_{\alpha}}| 1 \leq \alpha\leq n-1 \}$ where $e_n$ is the outward unit normal and $e_{T_\alpha}$ represents vectors in the tangential direction at $x_0'$. By a change of basis we write the unit radial direction vector $e_\gamma$ as 
	$e_\gamma=ae_n+be_{T_{\alpha}}.$ A simple computation shows that\begin{align*}
	    e_\gamma=\frac{\langle e_\gamma,e_n\rangle}{1-\langle e_n, e_{T_{\alpha}}\rangle ^2}e_n-\frac{\langle e_\gamma,e_n\rangle \langle e_n, e_{T_{\alpha}}\rangle}{1-\langle e_n, e_{T_{\alpha}}\rangle ^2}e_{T_{\alpha}}
	\end{align*}
	from which one can easily find a lower linear barrier for $u_n$ at $x_0'$. 
	 So far we have
		\begin{align}
		u_n\geq L_1^{-}(x',x_n)\text{ on }\partial \Omega \text{ with equality holding at $x'_0$} \label{ma}
		\end{align}
		 where 
		 \begin{equation*}
		     L_1^{-}(x',x_n)=-C(|\phi|_{C^4}, |\partial \Omega|_{C^4}, |\psi|_{C^{1}(\Omega)},\delta)x_n\geq -C|x|^2 .
		 \end{equation*} 
		 Now we choose coordinates such that $x'_{0}$ is the
origin and the $n-1\times n-1$ matrix $u_{TT}(0)$ is diagonalized. 
		\begin{claim}\label{3.0} We show that
		\[u_{nn}(0)\leq C\] where $C=C(||\psi||_{C^{1,1}(\overline \Omega)},  ||\phi||_{C^{4}(\overline \Omega)},n,\delta, |\partial \Omega|_{C^4} )$.\\ Note that unlike before $e_n$ is the outward unit normal now. 
	\end{claim}
	\begin{proof}
Note that $u_n$ is a solution of the linearized equation $g^{ij}D_{ij}u_{n}=\psi_n$, so we get
\begin{equation}|g^{ij}\partial_{ij}u_n|\leq C(||\psi||_{C^{1}(\Omega)}). \label{h}
		\end{equation}  
		Now we repeat the process in step 3.3. We define $w=u-B^{-}$ where $B^{-}$ is the subsolution defined in (\ref{bn}) and we see that $w$ satisfies condition (\ref{3.7}). We choose $\alpha$ and $\beta$ large such that 
		\begin{align}
		g^{ij}\partial_{ij}(\alpha w+\beta|x|^2 + u_n)\leq 0 \text{ in }\Omega \cap B_{r}(0) \label{i1}\\
		 \alpha w+\beta|x|^2+ u_n\geq 0 \text{ on }\partial(\Omega\cap B_{r}(0)).\nonumber
		\end{align}
		As $w\geq 0$ on $\partial (B_r(0)\cap \Omega))$ we first choose $\beta$. On $\partial B_{r}(0)\cap \Omega$, we have $\beta\geq -C/r^{2} $
	where $C= C(||\psi||_{C^{1}(\overline \Omega)}, \delta, ||\phi||_{C^{2}(\overline \Omega)},n,|\partial \Omega|_{C^2}) $ is the constant from the estimates in (\ref{1.1}) and (\ref{est1}).
On $\partial \Omega\cap  B_{r}(0)$, we find $\beta$ using (\ref{ma}). Choosing the larger of the two values we get the required value of $\beta$. 
	 Fixing this $\beta$ we choose $\alpha$ such that (\ref{i1}) holds. Using the constant $C$ from (\ref{h}), we choose $\alpha$ large such that $-\alpha\varepsilon_{0}+C<0$
		where $C=C(\beta,||\psi||_{C^{1}(\overline{\Omega})})$.
		Now since $(\alpha w+\beta|x|^2+u_n)(0)=0$, using Hopf's Lemma we get
		\begin{align*}
		\frac{\partial}{\partial_n}(\alpha w+\beta|x|^2+u_n)(0)\leq 0\\
		\implies u_{nn}(0)\leq C(||\psi||_{C^{1,1}(\overline \Omega)},  ||\phi||_{C^{4}(\overline \Omega)},n,\delta, |\partial \Omega|_{C^4} ).
		\end{align*}
		
\end{proof}
	Next we prove the following claim:
	\begin{claim} If $u_{nn}(0)$ is bounded above, then $u_{nn}(x)$ will be bounded above for all $x\in \partial \Omega$.
 	\end{claim}
\begin{proof}  
Suppose that for some $x_p\in \partial \Omega$, $u_{nn}(x_p)\geq K$ where $K$ is a large constant to be chosen shortly. 
From claim \ref{3.0}, we see that
at $0$, 
\begin{align*}
F(D^2u+N e_n \times e_n)-F(D^2u)=\delta_0 (||\phi||_{C^4(\partial \Omega)},||\psi||_{C^{1,1}(\overline \Omega)})>0\\
\implies \lim_{a\rightarrow\infty} F(D^2u+a e_n \times e_n)\geq F(D^2u+N e_n \times e_n)
\geq F(D^2u)+\delta_0
=\psi +\delta_0.
\end{align*}
From Lemma \ref{3.9}, we see that 
\[ \sum_{i=1}^{n-1} \arctan \lambda'_i(x_{p})\geq \psi+\delta_0-\frac{\pi}{2}
\]
and
\begin{align*}
\psi= F(D^2u)=\sum_{i=1}^{n-1}\arctan\lambda'_i+o(1)+\arctan(u_{nn}+O(1))\\
\geq \psi +\delta_0-\frac{\pi}{2}-\frac{\delta_0}{2}+\arctan(u_{nn}+O(1)).
\end{align*}
Now if we choose $K$ large enough such that 
\[ u_{nn}(x_p)>\tan(\frac{\pi}{2}-\frac{\delta_0}{2})-O(1)
\] we arrive at a contradiction. 
Therefore, choosing\\ $K\leq\tan(\frac{\pi}{2}-\frac{\delta_0}{2})-O(1)=C(||\psi||_{C^{1,1}(\overline \Omega)},  ||\phi||_{C^{4}(\overline \Omega)},n,\delta, |\partial \Omega|_{C^4} )$, we see that $u_{nn}(x)\leq K$ for all $x\in \partial \Omega$. Combining all the estimates in step 3 above we obtain (\ref{3}).
\end{proof}
\item[Step 4.] Bound for $||D^2u||_{C^{\alpha}(\overline{\Omega)}}$.\\
This follows from the interior $C^{2,\alpha}$ estimates by Evans-Krylov \cite{EK,Kr} and the boundary $C^{2,\alpha}$ estimates by Krylov \cite[Theorem 4.1]{Kr}. \\
Therefore, combining all the four steps above we obtain estimate (\ref{bdy}).

\end{itemize}
\end{proof}
\section{Proof of Theorem \ref{main}}
	In this section we use the $C^{2,\alpha}$ estimate up to the boundary to solve the following Dirichlet problem using the method of continuity.
	
	\begin{theorem}
		\label{1}
		Suppose that $\phi\in C^{4}(\overline \Omega)$ and $\psi: \overline \Omega\rightarrow [(n-2)\frac{\pi}{2}+\delta, n\frac{\pi}{2})$ is in $C^{1,1}(\overline \Omega)$ where $\Omega$ is a uniformly convex, bounded domain in $\mathbb{R}^{n}$ and $\delta>0$. Then there exists a unique solution $u\in C^{2,\alpha}(\overline{\Omega})$ to the Dirichlet  problem (\ref{lab}).
	\end{theorem}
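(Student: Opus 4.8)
The plan is to prove existence by the classical method of continuity, drawing the a priori bounds from Theorem \ref{2}, and then to pass from $C^{1,1}$ phase and $C^4$ boundary data to the (same) stated regularity by a routine mollification. First I would reduce to the case $\psi,\phi\in C^\infty(\overline\Omega)$: after extending $\phi$ to a $C^4$ function on $\mathbb R^n$ and mollifying, the approximants $\psi_m,\phi_m$ satisfy $\|\psi_m\|_{C^{1,1}(\overline\Omega)}\le\|\psi\|_{C^{1,1}(\overline\Omega)}$ and $\|\phi_m\|_{C^4(\overline\Omega)}\le C\|\phi\|_{C^4(\overline\Omega)}$, and for large $m$ the range of $\psi_m$ lies in $[(n-2)\tfrac\pi2+\tfrac\delta2,\,n\tfrac\pi2)$; hence the $C^{2,\alpha}$ estimate of Theorem \ref{2} applies to the corresponding solutions $u_m$ uniformly in $m$, and Arzelà--Ascoli yields a $C^{2,\alpha}(\overline\Omega)$ solution of (\ref{lab}) for the original data. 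So assume $\psi,\phi$ smooth. Since $\Omega$ is uniformly convex it admits a smooth uniformly convex defining function $\rho$ with $\rho<0$ in $\Omega$, $\rho=0$ on $\partial\Omega$, and $D^2\rho\ge c_0 I_n$; taking $A$ large (depending only on $n,\delta,c_0,\|\phi\|_{C^2}$) the function $u_\ast:=\phi+\tfrac{A}{2}\rho$ satisfies $u_\ast=\phi$ on $\partial\Omega$ and $\psi_\ast:=F(D^2u_\ast)\in[(n-2)\tfrac\pi2+\delta,\,n\tfrac\pi2)$, since the eigenvalues of $D^2u_\ast$ can be made arbitrarily large and then $\sum_i\arctan\lambda_i(D^2u_\ast)\to n\tfrac\pi2$. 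For $t\in[0,1]$ I would consider the family $F(D^2u)=\psi_t:=(1-t)\psi_\ast+t\psi$ in $\Omega$, $u=\phi$ on $\partial\Omega$, denoted $(\ast)_t$; each $\psi_t$ is smooth, has range in a fixed compact subinterval of $[(n-2)\tfrac\pi2+\delta,\,n\tfrac\pi2)$, and has $C^{1,1}$ norm bounded independently of $t$, so Theorem \ref{2} applies along the whole path with a $t$-independent constant. Set $S=\{t\in[0,1]:(\ast)_t\text{ has a solution }u_t\in C^{4,\alpha}(\overline\Omega)\}$; then $0\in S$ because $u_\ast$ solves $(\ast)_0$.

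Next I would establish uniqueness and openness. For uniqueness: if $u,v\in C^2(\overline\Omega)$ both solve $(\ast)_t$ with the same boundary values, then $w=u-v$ satisfies $a^{ij}\partial_{ij}w=0$ in $\Omega$ with $w=0$ on $\partial\Omega$, where $a^{ij}=\int_0^1 F^{ij}(sD^2u+(1-s)D^2v)\,ds$ is positive definite because the arctangent operator is elliptic on all symmetric matrices ($F^{ij}=g^{ij}>0$); the maximum principle forces $w\equiv0$, and the same argument at $t=1$ gives the uniqueness claimed in the theorem. For openness: at $t_0\in S$ the a priori bound of Theorem \ref{2} makes the linearization uniformly elliptic, the eigenvalues of $g^{ij}(D^2u_{t_0})$ lying in $[\tfrac1{1+C^2},1]$. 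The map $\mathcal F(u,t)=F(D^2u)-\psi_t$ from $\{u\in C^{2,\alpha}(\overline\Omega):u=\phi\text{ on }\partial\Omega\}\times\mathbb R$ to $C^{0,\alpha}(\overline\Omega)$ is $C^1$, and $D_u\mathcal F(u_{t_0},t_0)$ is the operator $v\mapsto g^{ij}(D^2u_{t_0})v_{ij}$ on $\{v\in C^{2,\alpha}(\overline\Omega):v=0\text{ on }\partial\Omega\}$, which is an isomorphism onto $C^{0,\alpha}(\overline\Omega)$ by the Schauder theory for linear uniformly elliptic equations with Hölder coefficients, no lower-order terms, and $C^{2,\alpha}$ boundary. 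The implicit function theorem then solves $(\ast)_t$ for $t$ near $t_0$ in $C^{2,\alpha}$, and elliptic bootstrapping (using that $\psi_t$ and $\partial\Omega$ are smooth) promotes these solutions to $C^{4,\alpha}(\overline\Omega)$; hence $S$ is open in $[0,1]$.

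For closedness, if $t_k\in S$ with $t_k\to t_\ast$ and solutions $u_{t_k}$, then $\|u_{t_k}\|_{C^{2,\alpha}(\overline\Omega)}\le C$ uniformly by Theorem \ref{2}, so a subsequence converges in $C^{2,\alpha'}(\overline\Omega)$ (with $\alpha'<\alpha$) to a limit $u_{t_\ast}\in C^{2,\alpha}(\overline\Omega)$ solving $(\ast)_{t_\ast}$; bootstrapping again gives $u_{t_\ast}\in C^{4,\alpha}(\overline\Omega)$, so $t_\ast\in S$. Thus $S$ is nonempty, open, and closed in $[0,1]$, hence $S=[0,1]$, so $(\ast)_1=(\ref{lab})$ has a $C^{4,\alpha}(\overline\Omega)\subset C^{2,\alpha}(\overline\Omega)$ solution for smooth data; combined with the mollification reduction and the uniqueness statement above, this proves the theorem.

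Granting Theorem \ref{2}, the rest is essentially bookkeeping, and the two points I would treat with care are: (i) arranging the continuity path so that $\psi_t$ remains supercritical with constants independent of $t$, so Theorem \ref{2} is genuinely uniform along $[0,1]$; and (ii) checking that mollification neither increases the $C^{1,1}$ norm of $\psi$ nor pushes its range outside $[(n-2)\tfrac\pi2+\tfrac\delta2,\,n\tfrac\pi2)$, so the final compactness limit yields a solution with the original data. The genuinely hard analytic content — the boundary $C^{2,\alpha}$ estimate with no assumed subsolution, especially the double normal derivative bound — is already carried by Theorem \ref{2}.
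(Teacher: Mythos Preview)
Your proposal is correct and follows the same route as the paper: the method of continuity powered by the a priori estimate of Theorem~\ref{2}, followed by a smooth approximation to pass from $C^{1,1}$ phase to smooth phase. The only difference is cosmetic --- the paper deforms to the constant phase $c_0=(n-2)\tfrac{\pi}{2}+\delta$ and invokes \cite{YYY} for solvability at $t=0$, whereas you construct an explicit starting solution $u_\ast=\phi+\tfrac{A}{2}\rho$; this makes your argument self-contained but is otherwise the same in spirit.
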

	\begin{proof}
	For each $t\in[0,1]$, consider the family of equations
	\begin{align}
	\begin{cases}
		F(D^{2}u)=t\psi +(1-t)c_0\text{ in }  \Omega\\
		u=\phi \text{ on }\partial \Omega \label{eq2}
		\end{cases}
		\end{align} where $c_0=(n-2)\frac{\pi}{2}+\delta$ and $\psi\in C^{2,\alpha}(\overline{\Omega})$.
	Let $I=\{t\in[0,1]\vert$ there exists $u_t\in C^{4,\alpha}(\overline{\Omega})$ solving (\ref{eq2})$\}$. We know that $0\in I$ from \cite{YYY}. The fact that $I$ is open is a consequence of the implicit function Theorem and invertibility of the linearized operator (\ref{linearize}). The closedness of $I$ follows from the apriori estimates. Hence, $1\in I$. Now using a smooth approximation\footnote{When $\psi$ is in $C^{1,1}(\overline{\Omega})$ 
we can take a sequence of smooth functions $\psi_k$ approximating $\psi$ and 
a sequence of solutions $u_k$ solving (\ref{lab}) with $\psi_k$ as the right hand side. Applying the uniform $C^{2,\alpha}$ estimate and taking a limit
solves the equation.} we solve (\ref{lab}) for $\psi\in C^{1,1}$. Uniqueness follows from the maximum principle for fully nonlinear equations.
	\end{proof}
	\begin{remark}There exists a unique smooth solution to the Dirichlet
problem (\ref{lab}) if all data is smooth and if the phase lies in the supercritical range.
\end{remark}
\begin{proof} of \textbf{Theorem \ref{main}}\\
	We approximate $\phi\in C^0(\partial\Omega)$ uniformly on $\partial \Omega$ by a sequence $\{\phi_k\}_{k\geq 1}$ of $C^4$ functions and solve 
	\begin{align*}
		\begin{cases}
		F(D^{2}u_k)=\psi \text{ in }  \Omega\\
		u_k=\phi_k \text{ on } \partial \Omega
		\end{cases}
		\end{align*} using Theorem 4.1.
	 Applying the interior Hessian estimates proved in \cite[Theorem 1.1]{AB} and the compactness in $C^2$ of bounded sets in $C^{2,\alpha}$ along with maximum principles,
	 we get convergence of $\{u_k\}$ to the desired solution $u\in C^{2,\alpha}$ on the interior and convergence of $\{\phi_k\}$ to the desired boundary function $\phi\in C^0$ on the boundary.

	\end{proof}
	
\section{Proof of Theorem \ref{2.20}}

	\begin{proof}
	
	We denote upper/lower semi-continuous functions by usc/lsc. 
	We define
	\begin{align*} A=\{u\in usc(\overline{\Omega})| F(D^2u)\geq \psi\text{ in } \Omega, \text{ u }\leq \phi \text{ on }\partial \Omega\}\\
	w(x)=\sup\{u(x)|u\in A\}.
	\end{align*}

	\begin{claim}\label{cll4.3}
	The above function $w$ is the unique continuous viscosity solution of (\ref{lab}) where $\psi$ is a constant.
	\end{claim}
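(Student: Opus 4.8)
The plan is to use Perron's method, following Ishii's approach, which requires three ingredients: (i) the existence of appropriate sub- and supersolution barriers so that the Perron family $A$ is nonempty and $w$ has finite values with the correct boundary behavior; (ii) the standard Perron argument showing that the upper semicontinuous envelope $w^*$ is a viscosity subsolution and the lower semicontinuous envelope $w_*$ is a viscosity supersolution of $F(D^2u)=\psi$; and (iii) a comparison principle that forces $w^* \le w_*$, hence $w^* = w_* = w$ is continuous, solves the equation, and attains the boundary data $\phi$. I would first reduce $\phi$ from continuous to smooth by uniform approximation: approximate $\phi$ uniformly by $\phi_k \in C^4(\partial\Omega)$, solve each problem, and pass to the limit using the comparison principle to control $\|w_k - w_m\|_{L^\infty}$ by $\|\phi_k - \phi_m\|_{L^\infty(\partial\Omega)}$; this gives a uniform Cauchy sequence converging to the desired $C^0$ solution. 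Alternatively one works directly with $\phi\in C^0$ and builds the barriers by approximation at each boundary point.

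For the barriers: on a uniformly convex domain, at each boundary point $x_0$ one constructs upper and lower barriers of the form $B^\pm = \pm C x_n + \frac12|x|^2 \tan(\bar\psi^\pm/n)$ exactly as in the proof of Theorem~\ref{2} (equations (\ref{bn}), (\ref{bbnn})), using that the constant phase $\psi$ lies strictly inside $(-n\frac\pi2, n\frac\pi2)$ so that $\tan(\psi/n)$ is finite and the quadratic barriers have phase exactly $\psi$ along their diagonal; uniform convexity of $\partial\Omega$ ensures these quadratics lie on the correct side of $\phi$ with equality at $x_0$. These barriers show $A \neq \emptyset$, that $w$ is bounded, and (via the barrier at each boundary point squeezing $w$) that $w$ is continuous up to $\partial\Omega$ with $w = \phi$ there. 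The interior Perron step is routine: if $w^*$ failed to be a subsolution at an interior point one bumps $w$ up by a small cap and contradicts maximality; if $w_*$ failed to be a supersolution one produces a larger member of $A$ the same way.

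The main obstacle is the comparison principle, since $F$ here is strictly elliptic but \emph{not} concave for subcritical phases, so the classical theory of viscosity solutions for concave operators does not apply. This is precisely why the statement is restricted to \emph{constant} phase: one invokes the comparison principle for strictly elliptic, non-concave, fully nonlinear equations of Jensen--Yuan type \cite{yy2004}, whose proof (or the relevant version of it) is supplied in the appendix of this paper. Concretely, if $u \in usc(\overline\Omega)$ is a subsolution and $v \in lsc(\overline\Omega)$ a supersolution with $u \le v$ on $\partial\Omega$, that comparison principle yields $u \le v$ in $\Omega$; applying it to $w^*$ and $w_*$ (whose boundary values both equal $\phi$ by the barrier argument) gives $w^* \le w_*$, and since trivially $w_* \le w^*$, we conclude $w = w^* = w_*$ is a continuous viscosity solution. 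Uniqueness of the $C^0$ solution follows from the same comparison principle applied to any two solutions. I would flag that the only place "constant phase" is essential is in citing \cite{yy2004}; for variable phase a corresponding comparison principle is unavailable, which is the content of the second remark in the introduction.
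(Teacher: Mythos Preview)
Your proposal is essentially the paper's proof: Perron's method with quadratic barriers built from uniform convexity (the paper's $\underline{w},\overline{w}$ are your $B^\pm$), reduction from $C^0$ to smooth boundary data via the comparison principle, and the key invocation of the strictly-elliptic comparison principle of \cite{yy2004} (supplied in the appendix) at precisely the step where constant phase is required. One small slip in your description of the ``routine'' Perron step: the bump-up argument is what shows $w_*$ is a supersolution (if it failed, a local cap would produce a larger member of $A$), whereas the subsolution property of $w^*$ follows from stability of viscosity subsolutions under suprema---you have the two roles swapped.
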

	\begin{remark}  The proof follows from the following four steps. It is noteworthy that the first three steps of the proof hold good for any continuous function $\psi$. The fourth step requires a certain comparison principle (see Theorem 6.1 of Appendix), which is only available for a constant right hand side. As of now, it is unknown if such a comparison principle holds good for a continuous right hand side. In order to highlight this distinction, we present the first three steps of the proof assuming $\psi$ is any continuous function. In the final step, we assume $\psi$ to be a constant, thereby proving Theorem \ref{2.20}.
\end{remark}
	
	\begin{itemize}
	    \item[Step 1.]
	We define the following functions:
	\begin{align*}
	    \underline{z}(x)=\overline{\lim_{y\rightarrow x}}w(y)\\
	    \overline{z}(x)=\lim_{\overline{y\rightarrow x}}w(y).
	\end{align*}
	We first show that $A$ is non-empty and $w,\underline{z},\overline{z}$ are well defined. \\
	    Since $\psi\in C(\overline{\Omega})$, there exists $\varepsilon'>0$ such that $-n\frac{\pi}{2}+\varepsilon'<\psi(x)<n\frac{\pi}{2}-\varepsilon'$ for all $x\in \overline{\Omega}$. Fixing this $\varepsilon'$ we define the following functions \[
	\psi_*=-n\frac{\pi}{2}+\varepsilon'<\psi<n\frac{\pi}{2}-\varepsilon'=\psi^*.\]
	Recalling (\ref{bn}) and (\ref{bbnn}) we define 
	\begin{align}
	\underline{w}(x)=-Cx_n+ \frac{1}{2}|x|^{2}\tan\frac{\psi^*}{n}\nonumber\\
	\overline{w}(x)=Cx_n+ \frac{1}{2}|x|^{2}\tan\frac{\psi_*}{n}\label{qq}
	\end{align}
	where $C=C(||\phi||_{C^2(\partial{\Omega})}, n,|\partial\Omega|_{C^2})$. 
	 By definition $\underline{w}\in A$, which shows that $A$ is non-empty. Next, $\max\{u,\underline{w}\}$ is upper semi continuous and still a subsolution of (\ref{lab}), so we replace $u\in A$ by $\max\{u,\underline{w}\}$. This shows $u\geq \underline{w}$ and, therefore, $w$ is well defined. 
	Next, we observe that since $\underline{w},\overline{w}$ are sub and super-solutions of (\ref{lab}) respectively, we have
	\[
	\underline{w}\leq u\leq \overline{w}\] which shows $\underline{z},\overline{z}$ are well defined.

	\item[Step 2.] We show that $\underline{z}$ is a subsolution of (\ref{lab}).\\
	Suppose not. Then we can find a quadratic polynomial $P$ such that $P(x)\geq \underline{z}(x)$ in $B_{\rho}(0)$ with equality holding at $0$, such that  $F(D^2P)<\psi_*$ in $B_{\rho}(0)$. Now we choose $\varepsilon>0$ such that \begin{equation}
	    F(D^2P+4\varepsilon I)<\psi_*. \label{name11}
	\end{equation}
	From the definition of $w$ and $\underline{z}$, we can find sequences $\{u_k\}\subset A$ and $\{x_k\}\subset \Omega$, with $x_k\rightarrow 0$ such that 
	\[ \underline{z}(0)=\overline{\lim_{y\rightarrow 0}}w(y)=\lim_{x_k \rightarrow 0}u_k(x_k).
	\]
 	For $k$ large enough, we see that 
 	\begin{align*}
 	    |u_k(x_k)-P(x_k)-2\varepsilon|x_k|^2|=|u_k(x_k)-P(0)+P(0)-P(x_k)-2\varepsilon|x_k|^2|\\
 	    =o(1)<\varepsilon \rho^2.
 	\end{align*}
	On $\partial B_{\rho}(0)$, we see 
	\[ u_k(x)\leq w(x)\leq \underline{z}(x)\leq P(x)+2\varepsilon|x|^2-\varepsilon\rho^2.
	\]
	Using the definition of $w$ and $\underline{z}$, we see that for any $k$, the following holds in $B_{\rho}(0)$
	\[Q(x)=P(x)+2\varepsilon|x|^2\geq u_k(x).
	\]
	Fixing a $k$ large enough, we observe the following. The functions $u_k(x_k)$ and $Q(x_k)$ are less than $\varepsilon\rho^2$ apart, but $u_k$ is at a distance of more than $\varepsilon\rho^2$ below $Q$ on $\partial B_{\rho}(0)$. So we drop $Q$ at most $\varepsilon \rho^2$ so that it touches $u_k$ at a point inside $B_{\rho}(0)$ while still remaining above $u_k$ on $\partial B_{\rho}(0)$.
	So there exists $\gamma\leq \varepsilon\rho^2$ such that in $B_{\rho}(0)$
\[u_k(x)\leq P(x)+2\varepsilon|x|^2-\gamma
\] with equality holding at an interior point of $B_{\rho}$. Now since $u_k$ is a subsolution, we have 
\[\psi\leq F(D^2P+4\varepsilon I).
\] This contradicts (\ref{name11}).\\
Noting that $\underline{z}$ is upper semi-continuous, we see that it is a subsolution of (\ref{lab}).
\item[Step 3.] We show that $\overline{z}$ is a supersolution of (\ref{lab}).\\
Suppose not. Then we can find a quadratic polynomial $P$ such that $P(x)\leq \overline{z}(x)$ in $B_{\rho}(0)$ with equality holding at $0$, such that  $F(D^2P)>\psi^*$ in $B_{\rho}(0)$. We choose $\varepsilon>0$ small enough such that \begin{equation}
    F(D^2P-2\varepsilon I)>\psi^* .\label{name22}
\end{equation}
 We have $\overline{z}\geq P-\varepsilon|x|^2$. We define a new quadratic $Q(x)=P(x)-\varepsilon|x|^2+\varepsilon\rho^2.$ 
Observe that, since $\overline{z}(0)=\underline{\lim}_{x_k\rightarrow 0}w(x_k)$, so for $k$ large enough, we have 
\begin{align*}
    w(x_k)=\overline{z}(0)+o(1)=P(0)-P(x_k)+P(x_k)+o(1)\\
    =P(x_k)+o(1)=Q(x_k)-\varepsilon\rho^2+o(1)< Q(x_k).
\end{align*}
This contradicts the supremum definition of $w$ since $Q$ is a subsolution of (\ref{lab}) by (\ref{name22}). Noting that $\overline{z}$ is lower semi-continuous, we see that it is a supersolution of (\ref{lab}).
\item[Step 4.] We take care of the boundary value in this final step. This is where we assume (for the first time) that $\psi$ is a constant. 
Note that now we may assume the boundary value $\phi\in C^2(\partial \Omega)$ since we can always approximate $\phi$ by a sequence of smooth functions $\phi_\delta$, that solve 
\begin{align*}
    \begin{cases} F(D^2u_\delta)=\psi \text{ in $\Omega$}\\
    u_\delta=\phi_\delta \text{ on $\partial \Omega$}
    \end{cases}
\end{align*}
and apply the comparison principle\footnote{see Appendix}
to get 
\[\max_{\Omega}|u_{\delta_1}-u_{\delta_2}|\leq \max_{x \rightarrow
\partial \Omega}|(\phi_{\delta_1}-\phi_{\delta_2})(x)|\rightarrow 0
\] as $\delta_1, \delta_2 \rightarrow 0$. We have $u_\delta\rightarrow u$ in $C^0$ as $\delta\rightarrow 0$. Next, we pick an arbitrary point $x_0\in \partial \Omega$ and recall the construction of $\underline{w}, \overline{w}$ from (\ref{qq}). Defining similar functions at $x_0$ and on using the comparison principle, we get $\underline{w}\leq u\leq \overline{w}$ with equality holding at $x_0$ for all $u\in A$.
Again since $\max(u,\underline{w} )\in A$ for all $u\in A$, we can replace 
\[w(x)=\sup_{u\in A}\max (u, \underline{w}).
\] We get $\underline{w}\leq u\leq \overline{w}$ with equality holding at $x_0$,
   which shows
\[\overline{z}(x_0)= \phi(x_0)=\underline{z}(x_0).
\]
Since $x_0\in \partial \Omega$ is arbitrary, we have
$\overline{z}=\underline{z}=\phi
$ on $\partial \Omega$.
Combining the above steps and on using the comparison principle we see 
\[\overline{z}=\underline{z}=w\in C^0(\overline{\Omega})
\] is the desired solution.  
This proves the existence part of claim (\ref{cll4.3}). Uniqueness again
follows from the comparison principle
	\end{itemize}

	\end{proof}

	\section{Appendix}
	We state the following linear algebra Lemma that was used in proving the double normal estimate in step 3.4 of section 3. 
	\begin{lemma}\cite[Lemma 1.2]{CNS}\label{3.9}
			Consider the following $n \times n$ symmetric matrix
			$$ M=\begin{bmatrix}
\lambda'_1 &  &  &  & a_1\\ 
           & . & &  & .\\
           &  &. &  & .\\
            &  &  & \lambda'_{n-1} & a_{n-1}\\
           a_1 & . & . & a_{n-1} & a\\  
\end{bmatrix}.
$$\\
		where $\lambda'_1, \lambda'_2,..,\lambda'_{n-1}$ are fixed, $|a_i|<C$ for $1\leq i\leq n-1$, and $|a|\rightarrow+\infty$. Then the eigenvalues $\lambda_1,\lambda_2,...,\lambda_n$ of $M$ behave like 
			\begin{equation*}
			\lambda'_1+o(1),\lambda'_2+o(1),..., \lambda'_n+o(1),a+O(1)
			\end{equation*}
			where $o(1)$ and $O(1)$ are uniform as $a\rightarrow\infty$.
		\end{lemma}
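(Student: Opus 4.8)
The plan is to establish Lemma \ref{3.9} by the standard perturbation-of-eigenvalues argument, treating the off-diagonal entries $a_1,\dots,a_{n-1}$ as a bounded perturbation and the diagonal entry $a$ as the large parameter. First I would separate the "bounded" block from the "large" entry: write $M = D + R + a\,e_n\otimes e_n$, where $D=\mathrm{diag}(\lambda_1',\dots,\lambda_{n-1}',0)$ is fixed and $R$ is the symmetric matrix whose only nonzero entries are $R_{in}=R_{ni}=a_i$ for $1\le i\le n-1$, hence $\|R\|\le C\sqrt{n-1}$ uniformly. The goal is then to show that $n-1$ eigenvalues of $M$ stay within $o(1)$ of $\lambda_1',\dots,\lambda_{n-1}'$ and one eigenvalue is $a+O(1)$ as $a\to\infty$.

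The cleanest route is through the characteristic polynomial together with a Rouché- or continuity-type argument, or alternatively through a direct Schur-complement computation. I would expand $\det(M-\mu I)$ along the last row and column: for $\mu$ bounded away from all the $\lambda_i'$, one gets
\begin{equation*}
\det(M-\mu I) = (a-\mu)\prod_{i=1}^{n-1}(\lambda_i'-\mu) \;-\; \sum_{i=1}^{n-1} a_i^2 \prod_{j\ne i}(\lambda_j'-\mu).
\end{equation*}
Dividing by $a$ and letting $a\to\infty$, the rescaled polynomial converges (uniformly on compact sets in $\mu$) to $\prod_{i=1}^{n-1}(\lambda_i'-\mu)$, so by Hurwitz's theorem $n-1$ of the roots converge to $\lambda_1',\dots,\lambda_{n-1}'$; a quantitative version of this convergence gives the $o(1)$ rate. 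For the large eigenvalue, I would use the trace identity $\sum_k \lambda_k(M) = a + \sum_i \lambda_i'$ together with the fact that the other $n-1$ eigenvalues are bounded, which forces the remaining one to equal $a+O(1)$; alternatively, apply Gershgorin's disk theorem, since the last row has disk centered at $a$ with radius $\sum|a_i|\le C(n-1)$, which is disjoint from a bounded neighborhood of the $\lambda_i'$ once $a$ is large.

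If instead one wants the sharpest and most self-contained statement, I would invoke the min-max (Courant–Fischer) characterization: since $R$ has operator norm $\le C$ and $a\,e_n\otimes e_n$ is rank one and positive, Weyl's inequality gives $\lambda_k(D) - C \le \lambda_k(D + a e_n\otimes e_n + R) \le \lambda_{k+1}(D+ae_n\otimes e_n)+C$ — but this only yields $O(1)$, not $o(1)$, for the bounded eigenvalues, so a refinement is needed: one observes that on the $(n-1)$-dimensional subspace $\{x_n=0\}$ the quadratic form of $M$ agrees exactly with that of $D$, while the correction from coupling to the $x_n$-direction is $O(a_i^2/a) = o(1)$ by completing the square in the $x_n$ variable (equivalently, computing the Schur complement $M/(a) = D' - a^{-1}(a_i a_j)_{ij}$, whose entries are $o(1)$ perturbations of $D'=\mathrm{diag}(\lambda_1',\dots,\lambda_{n-1}')$). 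Applying Weyl's inequality to this Schur complement gives the $o(1)$ rate cleanly.

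The main obstacle is not any single hard estimate but rather getting the $o(1)$ \emph{rate} (as opposed to mere $O(1)$ boundedness) for the $n-1$ small eigenvalues in a way that is uniform in $a$; the Schur-complement computation handles this, since the coupling correction is literally of size $a^{-1}\sum_i a_i^2 \le C^2(n-1)/a \to 0$. I would organize the writeup as: (i) set up the block decomposition and the Schur complement for $\mu$ in a fixed bounded region; (ii) apply Weyl's inequality to the Schur complement to get the $n-1$ eigenvalues within $o(1)$ of $\lambda_1',\dots,\lambda_{n-1}'$; (iii) use the trace identity (or Gershgorin) to pin the last eigenvalue at $a+O(1)$; (iv) note all error bounds depend only on $C$ and $n$, hence are uniform as $a\to\infty$.
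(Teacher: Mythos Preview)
The paper does not give its own proof of this lemma; it merely states the result and cites \cite[Lemma~1.2]{CNS}. So there is nothing to compare your argument against from the paper itself, and your task is really to supply the (omitted) proof.

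Your characteristic-polynomial route is correct and is essentially the argument one finds in \cite{CNS}: the cofactor expansion
\[
\det(M-\mu I)=(a-\mu)\prod_{i=1}^{n-1}(\lambda_i'-\mu)-\sum_{i=1}^{n-1}a_i^{2}\prod_{j\ne i}(\lambda_j'-\mu)
\]
is valid for this bordered (arrow) matrix, and after dividing by $a$ the uniform convergence on compacta to $\prod_i(\lambda_i'-\mu)$ yields the $n-1$ bounded roots with $o(1)$ error; the trace identity (or Gershgorin, as you note) then pins the last eigenvalue at $a+O(1)$. That part is clean and complete.

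One caution on your Schur-complement paragraph: the eigenvalues of $S=D'-a^{-1}bb^{T}$ are \emph{not} automatically eigenvalues of $M$; congruence via the completing-the-square change of variables preserves inertia, not spectrum. Your sentence ``applying Weyl's inequality to this Schur complement gives the $o(1)$ rate cleanly'' skips the step linking $\mathrm{spec}(S)$ to $\mathrm{spec}(M)$. This can be repaired---e.g.\ by testing Courant--Fischer on the tilted subspaces $\{(x',-a^{-1}b\cdot x')\}$, where the Rayleigh quotients of $M$ and $S$ agree up to a factor $1+O(a^{-2})$---but as written it is a gap. Since your characteristic-polynomial argument already does the job, I would make that the main line and either drop the Schur-complement remark or flesh it out with the min--max bridge just described.
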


	For the sake of completeness we state and prove the well known comparison principle for strictly elliptic equations\footnote{We learned this proof from \cite{yy2004}. }. 
\begin{theorem}\label{A}
Suppose that $u$ is a usc subsolution and $v$ is a lsc supersolution of the strictly elliptic equation (\ref{s1}) in $\Omega\subset\mathbb{R}^n$. If $u\leq v$ on $\partial \Omega $, then $u\leq v$ in $\Omega$.
        \end{theorem}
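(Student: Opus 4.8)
The plan is to use the standard viscosity-theoretic doubling-of-variables argument, adapted to a non-concave but strictly elliptic operator. Suppose for contradiction that $\sup_{\overline\Omega}(u-v)=m>0$. Since $u$ is usc, $v$ is lsc, and $u\le v$ on $\partial\Omega$, the positive maximum $m$ is attained at an interior point, and moreover it is attained on a compact subset of $\Omega$ away from $\partial\Omega$. For $\e>0$ consider the doubled function $\Phi_\e(x,y)=u(x)-v(y)-\frac{1}{2\e}|x-y|^2$ on $\overline\Omega\times\overline\Omega$, and let $(x_\e,y_\e)$ be a maximizer. The usual lemma (see e.g. the Crandall--Ishii--Lions User's Guide) gives $|x_\e-y_\e|^2/\e\to 0$, $x_\e,y_\e\to \bar x$ for some interior maximum point $\bar x$ of $u-v$, and $\Phi_\e(x_\e,y_\e)\to m$; in particular for $\e$ small both $x_\e$ and $y_\e$ lie in a fixed interior compact set, so the boundary inequality is not triggered.

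Next I would invoke the theorem on sums (Ishii's lemma) to produce symmetric matrices $X,Y$ with
\[
(p_\e, X)\in \overline{J}^{2,+}u(x_\e),\qquad (p_\e, Y)\in \overline{J}^{2,-}v(y_\e),\qquad p_\e=\tfrac{1}{\e}(x_\e-y_\e),
\]
and $X\le Y$ (with the standard quantitative bound, but $X\le Y$ is all we need here). By the definitions of viscosity sub/supersolution for $F(D^2u)=c$ applied at these points we get
\[
\sum_{i}\arctan\lambda_i(X)\ \ge\ c\ \ge\ \sum_{i}\arctan\lambda_i(Y).
\]
Now here is the point where strict ellipticity enters: the map $M\mapsto \sum_i\arctan\lambda_i(M)=\operatorname{tr}\arctan M$ is \emph{strictly} monotone on symmetric matrices — its derivative at $M$ is $(I+M^2)^{-1}>0$ — so $X\le Y$ forces $\operatorname{tr}\arctan X\le \operatorname{tr}\arctan Y$, with equality only if $X=Y$. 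Combined with the displayed inequality this gives $\operatorname{tr}\arctan X=\operatorname{tr}\arctan Y$ and hence $X=Y$. To extract a genuine contradiction I would instead run the argument with a strict subsolution: replace $u$ by $u_\eta(x)=u(x)+\eta(R^2-|x-x_0|^2)$ for a suitable ball $B_R(x_0)\supset\overline\Omega$ and small $\eta>0$; strict ellipticity (equivalently, $\partial F/\partial M_{ij}>0$ as a matrix) shows $u_\eta$ is still a subsolution, in fact $F(D^2u_\eta)\ge c+\theta$ for some $\theta=\theta(\eta)>0$ in the viscosity sense, while $u_\eta\le v$ on $\partial\Omega$ still holds for $\eta$ small provided $m>0$ was assumed (and $\sup(u_\eta-v)>0$ too). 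Rerunning doubling-of-variables with $u_\eta$ in place of $u$ now yields $c+\theta\le \operatorname{tr}\arctan X\le \operatorname{tr}\arctan Y\le c$, a contradiction. Letting $\eta\to 0$ is not even needed — any fixed small $\eta$ closes the argument — so $m\le 0$, i.e. $u\le v$ in $\Omega$.

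The main obstacle, and the only place any care is required, is the passage ``$X\le Y\Rightarrow \operatorname{tr}\arctan X\le\operatorname{tr}\arctan Y$ strictly,'' i.e. making the strict ellipticity do real work rather than just monotonicity; the clean way to package it is exactly the strict-subsolution perturbation above, which converts strict ellipticity of $F$ into a quantitative gap $\theta$ that survives the limit in the doubling parameter $\e$. Everything else — the existence and convergence of the maximizers $(x_\e,y_\e)$, the fact that they stay interior, and the theorem on sums — is entirely standard and I would cite it rather than reprove it. Note that no concavity of $F$ is used anywhere, which is the whole point of stating the principle in this generality; this is what makes it applicable in the constant-phase setting of Theorem \ref{2.20} even when $|\psi|$ is subcritical.
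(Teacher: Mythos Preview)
Your perturbation has the wrong sign. Adding $\eta(R^2-|x-x_0|^2)$, a \emph{concave} function, lowers the Hessian by $2\eta I$; since $F$ is monotone increasing in $D^2u$, this makes $u_\eta$ \emph{fail} to be a subsolution rather than become a strict one (and it also raises $u_\eta$ above $u$ on $\partial\Omega$, so the boundary inequality is endangered, not preserved). The intended move is $u_\eta=u+\eta(|x-x_0|^2-R^2)$: then $u_\eta\le u\le v$ on $\partial\Omega$, $\sup(u_\eta-v)>0$ for small $\eta$, and any $(p,X)\in\overline J^{2,+}u_\eta(x_\e)$ gives $(p',X-2\eta I)\in\overline J^{2,+}u(x_\e)$, hence $F(X-2\eta I)\ge c$. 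Even with the sign fixed, your assertion ``$F(D^2u_\eta)\ge c+\theta$ with $\theta=\theta(\eta)>0$ in the viscosity sense'' is false: the arctangent operator is only \emph{strictly}, not uniformly, elliptic, and $F(M)-F(M-2\eta I)\to 0$ as $|M|\to\infty$, so there is no gap uniform over all test matrices. This non-uniformity is precisely the obstruction to comparison for variable phase flagged in Remark~1.2. The argument is still salvageable for constant $c$, because you never actually need to send $\e\to 0$: fix one small $\e$ for which $(x_\e,y_\e)$ are interior, obtain $X\le Y$ from the theorem on sums, and read off
\[
c\ \le\ F(X-2\eta I)\ <\ F(X)\ \le\ F(Y)\ \le\ c,
\]
the strict middle inequality being valid at this single fixed matrix $X$ --- no uniform $\theta$ is required.

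The paper's own proof takes a different route that sidesteps the theorem on sums entirely. It regularizes via sup/inf convolutions $u^\e$, $v_\e$ (semiconvex/semiconcave, punctually twice differentiable a.e.) and then applies the Alexandroff (ABP) maximum principle to $(v_\e-u^\e)^-$. On the contact set $\Sigma$ of the convex envelope one has $D^2v_\e\ge D^2u^\e$ a.e., while $F(D^2v_\e)\le 0\le F(D^2u^\e)$; strict ellipticity forces $D^2v_\e=D^2u^\e$ a.e.\ on $\Sigma$, so the Monge--Amp\`ere integrand in ABP vanishes and $(v_\e-u^\e)^-\equiv 0$. Both arguments ultimately exploit strict ellipticity in the same way (an ordered pair of Hessians with reversed $F$-values must coincide), but the paper's packaging works pointwise a.e.\ and needs no perturbation step.
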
		
        
        \begin{proof}
    W.l.o.g we assume $\Omega=B_1(0)$ and $u\leq v-2\delta$ on $\partial B_1$ for some small $\delta>0$. We re-write equation (\ref{s1}) as \[F(D^2u)=\sum _{i=1}^{n}\arctan \lambda_{i}-c=0.
    \]
    Let $u^{\varepsilon}$ be an upper parabolic envelope\footnote{For $\varepsilon>0$, we define the upper $\varepsilon$-envelope of $u$ to be
        \[u^{\varepsilon}(x_0)=\sup_{x\in \overline{H}}\{u(x)+\varepsilon-\frac{1}{\varepsilon}|x-x_0|^2\}, \text{    for }x_0\in H
        \] where $H$ is an open set such that $\overline{H}\subset B_1$.} satisfying
        \begin{align*}
            F(D^2u^{\varepsilon})\geq 0 \\
            D^2u^{\varepsilon}\geq -C/\varepsilon\\
            ||u^{\varepsilon}||_{C^{0,1}}\leq C/\varepsilon
            \end{align*} outside a measure zero subset where $u^{\varepsilon}$ is punctually second order differentiable and $C$ is chosen such that $u^{\varepsilon}-v_{\varepsilon}\leq C-\varepsilon|x-x_0|^2$ on $\partial B_1$ with equality holding at $x_0\in B_1.$ We see that $0\leq u^{\varepsilon}(x)-u(x)\leq u(x^*)-u(x)+\varepsilon$ where $x^*\rightarrow x$ as $\varepsilon\rightarrow 0$.
		By symmetry, the lower parabolic envelope $v_{\varepsilon}$ satisfies
		\begin{align*}
            F(D^2v_{\varepsilon})\leq 0 \\
            D^2v_{\varepsilon}\leq C/\varepsilon\\
            ||v_{\varepsilon}||_{C^{0,1}}\leq C/\varepsilon
            \end{align*} and $0\geq v_{\varepsilon}(x)-v(x)\geq v(x_*)-v(x)-\varepsilon$ where $x_*\rightarrow x$ as $\varepsilon\rightarrow 0$. Note that $v_{\varepsilon}-u^{\varepsilon}\leq L+\frac{C}{\varepsilon}|x-x_0|^2$ for $x_0\in B_1$  where $L$ is a linear function. The convex envelope $\Gamma(v_{\varepsilon}-u^{\varepsilon})$ is in $C^{1,1}$. From Alexandroff's estimate we have 
            \[\sup_{B_1}(v_{\varepsilon}-u^{\varepsilon})^{-}\leq C(n)[\int_{\Sigma}\det D^2\Gamma]^{1/n}
            \] where $\Sigma=\{x\in B_1|\Gamma(x)=v_{\varepsilon}(x)-u^{\varepsilon}(x)\}$. Now in $\Sigma$, we have $0\leq D^2\Gamma\leq D^2(v_{\varepsilon}-u^{\varepsilon})$ or $L(x)\leq v_{\varepsilon}(x)-u^{\varepsilon}(x)$ near $x_0\in \Sigma$. For $K$ large since $u^{\varepsilon}+\frac{K}{\varepsilon}|x|^2$ is convex and $v_{\varepsilon}-\frac{K}{\varepsilon}|x|^2$ is concave, we have the following for a.e. $x_0\in B_1$
            \begin{align*}
                v_{\varepsilon}=\Gamma+\frac{K}{\varepsilon}|x|^2+O(|x-x_0|^2)\\
                u^{\varepsilon}=\Gamma+\frac{K}{\varepsilon}|x|^2+O(|x-x_0|^2).
            \end{align*}
            Again since $v_{\varepsilon}$ is a super solution and $u^{\varepsilon}$ is a sub solution, for a.e. $x_0\in B_1$, we have \begin{align*}
                F(D^2v_{\varepsilon}(x_0))\leq 0\\
                F(D^2u^{\varepsilon}(x_0))\geq 0\\
                F(D^2v_{\varepsilon}(x_0))-F(D^2u^{\varepsilon}(x_0))\leq 0.
            \end{align*}
            Also, a.e. $x_0\in \Gamma$, we have $D^2v_{\varepsilon}(x_0)-D^2u^{\varepsilon}(x_0)\geq 0$. 
            However, $F$ is strictly elliptic, so we must have $F(D^2v_{\varepsilon})-F(D^2u^{\varepsilon})\geq 0$, which shows
            \begin{align*}
                F(D^2v_{\varepsilon}(x_0))=F(D^2u^{\varepsilon}(x_0))\text{ a.e } x_0\in \Sigma.
            \end{align*} Again, given that $F$ is strictly elliptic, the line with the positive direction $D^2v_{\varepsilon}(x_0)-D^2u^{\varepsilon}(x_0)$ intersects the level set $\{F=C\}$ only once, which implies $D^2v_{\varepsilon}(x_0)=D^2u^{\varepsilon}(x_0)$. This shows $
            \sup_{B_1}(v_{\varepsilon}-u^{\varepsilon})^{-}\leq 0$, which proves that
            \[v\geq v_{\varepsilon}\geq u^{\varepsilon}\geq u \text{ in } B_1.
            \]
            \end{proof}

	\bibliographystyle{amsalpha}
	\bibliography{name}
	
\end{document}